\documentclass[12pt]{amsart}
\usepackage[dvips]{graphicx}
\usepackage{verbatim}
\usepackage{geometry}
\usepackage{float}
\usepackage[all,2cell]{xy} \UseAllTwocells \SilentMatrices
\geometry{letterpaper,top=1in, bottom=1in, left=.75 true in, right=.75 true in}
\linespread{1.2}

\newtheorem{theorem}{Theorem}[section]
\newtheorem{lemma}[theorem]{Lemma}
\newtheorem{prop}[theorem]{Proposition}

\newtheorem{cor}[theorem]{Corollary}
\theoremstyle{definition}
\newtheorem{definition}[theorem]{Definition}

\numberwithin{equation}{section}
\numberwithin{figure}{section}
\numberwithin{table}{section}

\newcommand{\norm}[1]{||#1||}

\def\NN{\mathbb{N}}

\def\RR{\mathbb{R}}

\newcommand{\ran}{{\rm Ran}}

\begin{document}


\title{Dual Bases Functions in Subspaces}

\author{Scott N. Kersey}

\curraddr{Department of Mathematical Science, Georgia Southern University, Statesboro, GA 30460-8093}
\email{skersey@georgiasouthern.edu}
\thanks{}
\subjclass{41A05, 41A10, 41A15, 65D05 65D07}
\date{\today}
\dedicatory{ }
\keywords{}

\begin{abstract}
In this paper we study dual bases functions in subspaces.
These are bases which are dual to functionals on larger linear space.
Our goal is construct and derive properties of certain bases obtained from
the construction, with primary focus on polynomial spaces in B-form.
When they exist, our bases are always affine (not convex), 
and we define a symmetric configuration that
converges to Lagrange polynomial bases.
Because of affineness of our bases, we are able to derive certain 
approximation theoretic results 
involving quasi-interpolation and a Bernstein-type operator.

In a broad sense, it is the aim of this paper to present a new way to view 
approximation problems in subspaces.
In subsequent work, we will apply our results to dual bases in subspaces of spline 
and multivariate polynomial spaces,
and apply this to the construction of blended function approximants used
for approximation in the sum of certain tensor product spaces.
\end{abstract}

\maketitle

\vskip 10pt

\section{Introduction}

Let $X$ be a finite dimensional vector spaces (of dimension $n$).
Of fundamental importance is the basis.
There are various reasons to choose a particular basis, and each
basis has advantages and disadvantages.
Often it is the action of certain functionals that lends importance to a particular basis.
For example, the Lagrange basis is important because it is dual to point evaluation.
In general, any basis $\Phi^n = [\Phi_1, \ldots, \Phi_n]$ for $X$ has a dual map
$\Lambda^n = [\lambda_1, \ldots, \lambda_n]$ for $X^*$ satisfying $\lambda_i \Phi_j = \delta_{ij}$.
That is, $\Lambda^{nT}\Phi^n = I$, the identity matrix.
Conversely, any basis $\Lambda^n \subset X^*$ is dual to some basis $\Phi^n$ of $X$.
It is the dual map $\Lambda^n$ that extracts information from the functions that is of 
particular interest here, which arguably plays a more prominent role than the basis $\Phi^n$ itself.
This process allows one to consider the \emph{information} 
we are trying to capture as our primary goal.

The above concepts are well-known and well-studied. 
In this paper, we are interested in investigating bases for \emph{subspaces} $Y$ of $X$
that are dual to \emph{subsets} of the functionals in the map $\Lambda^n$. 
For example, suppose that $\Lambda^n = [\lambda_1, \ldots, \lambda_n]$ are linearly independent
on the n-dimensional space $X$, and $Y$ is an m-dimensional subspace of $X$ with $m<n$.
Then, 
given an injective map $s : [1:m] \rightarrow [1:n]$,
our questions are:
\begin{enumerate}
\item Is the subset $\Lambda_n(s) = [\lambda_{s(1)}, \ldots, \lambda_{s(m)}]$ of $\Lambda_n$ 
linearly independent on $Y$?
\item If linearly independent, what is the basis $D^m$ for $Y$ that is dual to $\Lambda^n(s)$
in the sense that $\Lambda^{n}(s)^TD^m = I$.
\item What are the properties of this dual basis.
\end{enumerate}

In this paper we consider the action of subsets of functionals for certain basis
on \emph{subspaces} of the original space.
This allows us to view how the subspace looks according to information on the whole space.
In some sense, we are addressing the question 
``how do you approximate with less (or not enough) information?'' 
But it is not always possible to construct such bases, 
because the functionals are not always \emph{linearly independent} on the subspace.
For example, the only subset of $\Lambda = [\delta_0, \delta_0D, \delta_0 D^2]$ that is linearly 
independent on $\ran[1,(\cdot)]$ is $\Lambda = [\delta_0, \delta_0D]$, 
not $\Lambda = [\delta_0, \delta_0D^2]$ or $\Lambda = [\delta_0D, \delta_0D]$.
As we will see in this paper, the situation is different in the Bernstein basis.

It is the objective of this paper to determine if dual bases exist for certain 
subspaces of polynomial spaces, 
and if so to compute and characterize these bases, and then determine their 
properties.
In particular, we show that in the Bernstein basis for $\$_n$,
any $m+1$-selection of the dual functionals are linearly independent on $\$_m$.
Moreover, by choosing a particular symmetric choice of the functionals, 
we show that the corresponding dual basis converge to the Lagrange polynomial basis.
Later, we derive certain approximation results concerning quasi-interpolation
and a Bernstein-type operator.

\section{Dual Bases in Subspaces $Y$ of $X$}

As defined above, $X$ is a vector space of dimension $n$ with basis $\Phi^n$
and dual map $\Lambda^n$,
and $Y$ a subspace of $X$ of dimension $m$, $m < n$, with basis $\Phi^m$.
For $y \in Y \subset X$,
we have $y = \Phi^m\alpha = \Phi^n\beta$ for some coefficient sequences 
$\alpha \in \RR^m$ and $\beta \in \RR^n$.
Applying the dual basis, we have
$$
\Lambda^{nT} \Phi^m \alpha = \Lambda^{nT}\Phi^n \beta = \beta.
$$
Hence, $\beta = E \alpha$ with $E := \Lambda^{nT} \Phi^m$.
The matrix $E$ embeds the coefficients $\alpha$ of $y$ in the basis for $Y$
to it's coefficients in the basis for $X$.
By inclusion of $Y$ into $X$, we can construct
the embedding $e := \Phi^n \circ E \circ (\Phi^m)^{-1}$
of $Y$ into $X$.
And since $\Phi^m \alpha = \phi^n E\alpha$ for all $\alpha\in\RR^m$,
it follows that $\Phi^m = \Phi^n E$.
All said, this can be visualized as in the following commutative diagram.

\begin{figure}[H]
\[
\xymatrix{ 
Y  \ar@{^{(}->}[rr]^e  && X \\
\RR^{m} \ar[u]^{\Phi^m}  \ar[rr]^E && \RR^n \ar[u]_{\Phi^n} 
}
\]
\caption{Embedding of $\$_{m,y}$ into $\$_{n,x}$.}
\end{figure}

Moreover, we note that
$$
\Lambda^{nT} \Phi^m = \Lambda^{nT} \Phi^nE = E = E \Lambda^{mT} \Phi^m 
= (\Lambda^{m} E^T)^T \Phi^m.
$$
Therefore, $\Lambda^n = \Lambda^m E^T$.
Hence, we have the following result concerning the action of $\Lambda^n$ on a subspace $Y$
of $X$.

\begin{prop}
\label{prop2}
Let $(Y,\Phi^m,\Lambda^m)$ be an $m$-dimensional subspace of a vector space $(X,\Phi^n,\Lambda^n)$,
with basis $\Phi^m$ dual to $\Lambda^m$ and $\Phi^n$ dual to $\Lambda^n$.
Then, $Y$ embeds into $X$ by the map $e = \Phi^n \circ E \circ (\Phi^m)^{-1}$
with $E = \Lambda^{nT}\Phi^m$,
and the bases transform as
$$\boxed{\Phi^m = \Phi^n E}.$$
Further,
the dual basis for $X^*$ map to the dual basis for $Y^*$ by the map
$$\boxed{\Lambda^{n} = \Lambda^m E^T}.$$
\end{prop}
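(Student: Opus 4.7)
The plan is to follow the informal computation in the paragraphs preceding the statement and organize it into three clean steps, one for each of the assertions: the formula $E = \Lambda^{nT}\Phi^m$ and the embedding $e$, the basis identity $\Phi^m = \Phi^n E$, and the dual-map identity $\Lambda^n = \Lambda^m E^T$.

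First I would fix an arbitrary $y \in Y \subset X$ and expand it in both bases as $y = \Phi^m \alpha = \Phi^n \beta$ for unique $\alpha \in \RR^m$ and $\beta \in \RR^n$. Applying $\Lambda^{nT}$ to both sides, the duality $\Lambda^{nT}\Phi^n = I_n$ forces $\beta = \Lambda^{nT}\Phi^m \alpha$, so $\beta = E\alpha$ with $E := \Lambda^{nT}\Phi^m$. Because $\Phi^m : \RR^m \to Y$ is a bijection, $\alpha$ ranges over all of $\RR^m$ as $y$ ranges over $Y$, so the identity $\Phi^m \alpha = \Phi^n E \alpha$ holds for every $\alpha$; this gives the boxed equation $\Phi^m = \Phi^n E$ as an equality of linear maps $\RR^m \to X$. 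The embedding claim is then formal: $e := \Phi^n \circ E \circ (\Phi^m)^{-1}$ is well-defined, and on $y = \Phi^m \alpha$ it returns $\Phi^n E \alpha = \Phi^m \alpha = y$, so $e$ is precisely the inclusion $Y \hookrightarrow X$ rendered in coordinates.

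For the second boxed identity I would evaluate $\Lambda^{nT}\Phi^m$ in two ways. By definition it equals $E$; on the other hand, inserting $I_m = \Lambda^{mT}\Phi^m$ gives $E = E\,\Lambda^{mT}\Phi^m = (\Lambda^m E^T)^T \Phi^m$. Hence $\Lambda^{nT}\Phi^m = (\Lambda^m E^T)^T \Phi^m$, and since $\Phi^m$ is a basis of $Y$, the functionals $\Lambda^n$ and $\Lambda^m E^T$ agree on every element of $Y$.

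The one point that requires care, and which I expect to be the only real subtlety, is the interpretation of the equality $\Lambda^n = \Lambda^m E^T$: the components of $\Lambda^n$ live in $X^*$, while the components of $\Lambda^m E^T$ only make sense as functionals on $Y$, so the identity must be read as an equality after restriction to $Y$ (equivalently, as an equality of the $1 \times n$ matrices obtained after applying either side to an arbitrary $y \in Y$). Once this restriction is declared, the remaining work is the two-line matrix manipulation above and poses no further obstacle.
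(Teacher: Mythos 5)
Your proposal is correct and follows essentially the same route as the paper: expanding $y=\Phi^m\alpha=\Phi^n\beta$, applying $\Lambda^{nT}$ to get $\beta=E\alpha$ with $E=\Lambda^{nT}\Phi^m$, deducing $\Phi^m=\Phi^n E$, and inserting $I=\Lambda^{mT}\Phi^m$ to obtain $(\Lambda^m E^T)^T\Phi^m=E$, hence $\Lambda^n=\Lambda^m E^T$ on $Y$. Your explicit remark that the second boxed identity is an equality of functionals only after restriction to $Y$ is a welcome clarification of a point the paper leaves implicit.
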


Since $\Lambda^n \subset X^* \subset Y^*$, the functionals $\lambda_i^n \in X^*$ are also
functionals on $Y$.
But since $\dim Y < \#\Lambda^n$, the functionals in $\Lambda^n$ cannot be linearly independent on $Y$.
However, 
since 
$$
\Lambda^{nT} \Phi^m = \Lambda^{nT} \Phi^nE = E 
$$
is a 1-1 matrix, $\Lambda^n$ does span $Y^*$
(moreover, $\Lambda^n E$ is a basis for $Y^*$ since $(\Lambda^{n}E)^T \Phi^m = E^TE$ is invertible).
Therefore, we can trim $\Lambda^n$ to a basis for $Y^*$ by removing 
\emph{inessential} vectors (this is a standard construction, c.f. \cite{KK08}).
This leaves an $m$-subvector $\Lambda^n(s)$ of $\Lambda^n$ that is linearly independent on $Y$, 
where $s$ is an injective map $s : [1:m] \rightarrow [1:n]$
(which we call a \emph{selection map}).
Then, with $I$ the $n \times n$ identity matrix, we have
$$
\Lambda^n(s)^T \Phi^m  = \Lambda^n(s)^T \Phi^n E = (\Lambda^n I(:,s))^T \Phi^n E
= I(s,:)\Lambda^{nT} \Phi^n E = I(s,:) E = E(s,:).
$$
Therefore, an $m$-subvector $\Lambda^n(s)$ of $\Lambda^n$ is linearly independent on $Y$ 
iff $E(s,:)$ is invertible.
Moreover, when $E(s,:)$ is invertible,
the basis of $Y = \ran(\Phi^m)$ that is dual to $\Lambda^n(s)$ is $\Phi^m E(s,:)^{-1}$,
as follows by computing the change of basis $\Phi^m \rightarrow \Phi^m A$:
$$
I = \Lambda^n(s)^T \Phi^m A = E(s,:) A.
$$
Hence, the basis of $Y$ that is dual to $\Lambda^n(s)$ is $D^m := \Phi^m E(s,:)^{-1}$.
The following summarizes the above statements:

\begin{prop}
\label{prop1}
Let $(X,\Phi^n,\Lambda^n)$ be a vector space with basis $\Phi^n$
dual to $\Lambda^n$,
and let $(Y,\Phi^m,\Lambda^m)$ be a subspace with basis $\Phi^m = \Phi^n E$
dual to $\Lambda^m$.
Let $s : [1:m] \rightarrow [1:n]$ denote a selection (injective) map.
\begin{enumerate}
\item $\Lambda^nE$ is linearly independent on $Y$ and of length $m$ (hence a basis for $Y^*$).
\item There exist (at least one) selections $\Lambda^n(s)$ of $\Lambda^n$ linearly independent on $Y$.
Sometimes only one selection is linearly independent, and sometimes all
selections are linearly independent 
(this is of particular interest later in this paper).
\item $E(s,:) = \Lambda^n(s)^T \Phi^m$
\item $\Lambda^n(s)$ is linearly independent on $Y$ iff $E(s,:)$ is invertible.
\item  
If $\Lambda^n(s)$ is linearly independent on $Y$, then
the basis of $Y$ that is dual to $\Lambda^n(s)$ in the sense that
$\Lambda^{n}(s)^T D^m = I$ is $D^m = \Phi^m E(s,:)^{-1}$.
\end{enumerate}
\end{prop}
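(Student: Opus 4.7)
The plan is to assemble the five assertions from the algebraic identities already derived in the paragraphs preceding the proposition, together with Proposition~\ref{prop2}. Most of the work has in fact been done inline; my job is to package it cleanly and to supply the one missing ingredient, namely the existence of a valid selection in~(2).

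For~(1), I would start from $E = \Lambda^{nT}\Phi^m$ and compute $(\Lambda^n E)^T\Phi^m = E^T\Lambda^{nT}\Phi^m = E^T E$. The matrix $E$ is $n\times m$ and has rank~$m$: indeed, $\Phi^m$ is a basis for the $m$-dimensional $Y\subset X$ and $\Lambda^{nT}$, being the coefficient map with respect to the basis $\Phi^n$ of $X$, is injective on $X$, hence on $Y$. Thus $E^T E$ is a nonsingular $m\times m$ matrix, which means the length-$m$ list $\Lambda^n E$ is linearly independent on $Y$ and so forms a basis of $Y^*$ (it is dual to $\Phi^m(E^T E)^{-1}$ by the same computation used in~(5) below). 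Item~(3) is the direct calculation
\[
\Lambda^n(s)^T\Phi^m = (\Lambda^n I(:,s))^T\Phi^n E = I(s,:)\,\Lambda^{nT}\Phi^n\, E = I(s,:)\,E = E(s,:),
\]
which is already displayed in the text and simply needs to be recorded.

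For~(4), $\Lambda^n(s)$ is linearly independent on $Y = \mathrm{ran}(\Phi^m)$ iff the map $c \mapsto c^T \Lambda^n(s)^T\Phi^m$ is injective on $\RR^m$, and by~(3) this map is $c \mapsto c^T E(s,:)$; injectivity of the square matrix $E(s,:)$ is the same as invertibility. Item~(2) then follows from the rank argument used in~(1): since $E$ has full column rank $m$, at least one of its $m\times m$ minors is nonzero, so at least one selection $s$ makes $E(s,:)$ invertible, and by~(4) the corresponding $\Lambda^n(s)$ is linearly independent on $Y$. The auxiliary remark about ``sometimes only one / sometimes all'' selections working is not part of the logical content of the proposition and requires no proof here; concrete examples are deferred to later sections. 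Finally, for~(5) I would write any basis of $Y$ as $\Phi^m A$ for a nonsingular $m\times m$ matrix $A$ and impose $\Lambda^n(s)^T\Phi^m A = I$, which by~(3) becomes $E(s,:)A = I$ and forces $A = E(s,:)^{-1}$.

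No step presents a genuine obstacle: every claim reduces, via $E = \Lambda^{nT}\Phi^m$, to a two- or three-term matrix identity, and the only nonformal ingredient is the standard fact that an $n\times m$ matrix of rank $m$ has an invertible $m\times m$ row submatrix. If anything, the ``hardest'' step is~(2), and even there the difficulty is purely one of exposition, namely deciding how much emphasis to place on the existence argument versus simply citing it.
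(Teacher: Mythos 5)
Your proposal is correct and follows essentially the same route as the paper, which proves this proposition by the inline computations preceding it: the identity $E=\Lambda^{nT}\Phi^m$, the invertibility of $E^TE$ for item (1), the display $\Lambda^n(s)^T\Phi^m=E(s,:)$ for items (3)--(4), and the change-of-basis equation $E(s,:)A=I$ for item (5). The only cosmetic difference is in item (2), where the paper trims the spanning set $\Lambda^n|_Y$ of $Y^*$ to a basis (citing the standard construction), while you phrase the same fact as ``a full-column-rank $n\times m$ matrix has an invertible $m\times m$ row submatrix''; these are equivalent.
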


\section{Additional Properties of Dual Bases in Subspaces}

In this section we state definitions and properties useful
for certain constructions that we consider in the subsequent sections.
Firstly, by Proposition \ref{prop1}, there always exists some $s$
such that $\Lambda^n(s)$ is linearly independent on $Y$, 
and in this case we have the basis
$D^m = \Phi^m E(s,:)^{-1}$ with $E = \Lambda^n(s)^T \Phi^m$.
However, this may or may not be true for all $s$.
Hence, we give the following definition to make this distinction.

\begin{definition}
We say the embedding of $(Y,\Phi^m,\Lambda^m)$ into $(X,\Phi^n,\Lambda^n)$ 
is \emph{complete} if $\Lambda^{n}(s)$ is
linearly independent on $Y$ for all injective maps (selections) 
$s : [1:m] \rightarrow [1:n]$.
\end{definition}

\begin{prop}
Let $Y = \$^m$ be the space of polynomials of degree at most $m$,
and let $X = \$^n$ be the space of polynomials of degree at most $n$.
\begin{enumerate}
\item
Let $\Phi^m$ and $\Phi^n$ be power basis for $\$^m$ and $\$^n$.
Then, the embedding is not complete.
Moreover, $\Lambda^n(s)$ is linearly independent on $\Phi^m$ iff $s = [0:m]$.
\item
Let $\Phi^m$ and $\Phi^n$ be Bernstein bases for $\$^m$ and $\$^n$, respectively.
Then, the embedding is complete.
\end{enumerate}
\end{prop}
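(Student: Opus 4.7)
The plan is to compute the embedding matrix $E$ in each case and then apply the invertibility criterion of Proposition \ref{prop1}. For clarity I index basis elements and selections from $0$, so selections are written $s = \{s_0 < s_1 < \cdots < s_m\} \subset [0:n]$.

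For (1), the basis dual to $\Phi^n = [1, x, \ldots, x^n]$ is the sequence of Taylor functionals $\lambda_i : f \mapsto f^{(i)}(0)/i!$. Hence $E_{ij} = \lambda_i(x^j) = \delta_{ij}$ for $i \in [0:n]$, $j \in [0:m]$, so $E$ is the $(m+1)\times(m+1)$ identity stacked above $n-m$ zero rows. Thus $E(s,:)$ is invertible iff $s \supseteq [0:m]$, and since $|s| = m+1$ this forces $s = [0:m]$, yielding both assertions of (1).

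For (2), I begin from the degree-elevation identity
\[
B_j^m(x) \;=\; \sum_{i=j}^{j+d}\frac{\binom{m}{j}\binom{d}{\,i-j\,}}{\binom{n}{i}}\,B_i^n(x), \qquad d := n - m,
\]
obtained by multiplying $B_j^m(x)$ by $(x + (1-x))^d = 1$ and collecting like terms. This identifies $E_{ij} = \binom{m}{j}\binom{d}{i-j}/\binom{n}{i}$, with the convention $\binom{d}{k}:=0$ for $k \notin [0,d]$. Factoring $E = D_1 M D_2$ with positive diagonals $D_1 = \mathrm{diag}(1/\binom{n}{i})$ and $D_2 = \mathrm{diag}(\binom{m}{j})$ reduces invertibility of $E(s,:)$ to invertibility of the banded Toeplitz submatrix $M(s,[0:m])$ with $M_{ij} = \binom{d}{i-j}$. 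The central input is that the coefficient sequence of $(1+z)^d$ is a P\'olya frequency sequence, making $M$ totally nonnegative; Karlin's minor criterion then yields $\det M(s,[0:m]) > 0$ iff $\binom{d}{s_k - k} \ne 0$ for every $k$, equivalently $0 \le s_k - k \le d$. Both bounds are automatic from the structure of $s$: $s_k \ge k$ by strict monotonicity starting at $s_0 \ge 0$, while $s_k \le d + k$ because the $m - k$ indices $s_{k+1}, \ldots, s_m$ must fit into $[s_k + 1, n]$, forcing $n - s_k \ge m - k$. Hence every $(m+1)\times(m+1)$ minor of $M$ is nonzero and the embedding is complete.

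The main obstacle is invoking the Toeplitz total-positivity step cleanly. If a direct appeal to Karlin is not desired, I would fall back on a short Lindstr\"om--Gessel--Viennot argument: $\det M(s,[0:m])$ equals the signed count of systems of non-intersecting lattice paths from the columns $[0:m]$ to the rows $s$, and the inequalities $0 \le s_k - k \le d$ are precisely what guarantees the existence of a (necessarily unique, monotone) such configuration, whose weight $\prod_k \binom{d}{s_k - k}$ is strictly positive.
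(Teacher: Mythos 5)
Your part (1) is essentially the paper's own argument: with the Taylor functionals dual to the power basis, $E=\Lambda^{nT}\Phi^m$ is the $(m+1)\times(m+1)$ identity sitting above zero rows, so $E(s,:)$ is invertible precisely when $s=[0:m]$, and completeness fails.

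For part (2) you take a genuinely different route. The paper does not analyze the degree-elevation matrix directly: it changes to the power basis, writes $\Lambda^n(s)^TP^m=T^n(s,0:m)\,d^n(0:m,0:m)$ with $T^n$ Pascal's lower-triangular matrix, and invokes the result of \cite{K12} that every $(m+1)$-row submatrix $T^n(s,0:m)$ is invertible. You instead strip the positive diagonal factors off $E$ and reduce completeness to the nonvanishing of all maximal minors of the banded binomial Toeplitz matrix $M_{ij}=\binom{d}{i-j}$, $d=n-m$; this is self-contained (no appeal to \cite{K12}) and makes transparent why every selection works, since the band condition $0\le s_k-k\le d$ holds automatically. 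Two cautions, both fixable. First, ``minor positive iff its diagonal entries are nonzero'' is false for general totally nonnegative matrices (the all-ones matrix is TN with nonzero diagonal and is singular), so TN-ness of $M$ alone does not suffice; you must use the specific Toeplitz/P\'olya-frequency structure --- e.g.\ factor $(1+z)^d$ into $d$ bidiagonal Toeplitz factors and apply Cauchy--Binet, or run the Lindstr\"om--Gessel--Viennot argument you describe. That ``fallback'' should therefore be the main argument. Second, the parenthetical claims in your LGV paragraph are incorrect, though not needed: by planarity the determinant equals the (unsigned) number of non-intersecting path families, which is in general larger than one and is not $\prod_k\binom{d}{s_k-k}$ (for $d=2$, rows $\{1,2\}$, columns $\{0,1\}$ the determinant is $3$, while the product is $4$). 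What the argument actually requires is the existence of a single vertex-disjoint family under $0\le s_k-k\le d$, which you can get explicitly by letting the $k$-th path take all of its $s_k-k$ up-steps as early as possible; the paths then remain strictly ordered at every layer. With those repairs your proof of (2) is correct and arguably more informative than the paper's, at the cost of importing (or reproving) a piece of total-positivity machinery, whereas the paper's route is shorter given the Pascal-submatrix result it cites.
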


\begin{proof}
Part (2) is non-trivial, and  will be proved later in this paper.
For part (1), 
recall that the power basis $P^n = [1, (\cdot), \ldots, (\cdot)^n]$
dual to $\Lambda^n = [\delta_0, \delta_0D, \ldots , \delta_0D^n/n!]$,
and $(\$_m, P^m, \Lambda^m)$ is the subspace with $m < n$.
Then $P^m = P^n E$ for $E = I(:,0:m)$, 
with $I$ is the $(n+1) \times (n+1)$ identity matrix.
It follows by Proposition \ref{prop2} that
$\Lambda^n  = \Lambda^m E^T = \Lambda^m I(0:m,:)$
on $\$_m$.
Let $s$ be a selection map.
Since $I(0:m,s)$ is invertible iff $s = 0:m$, it follows that
$\Lambda^n(s)$ is linearly independent on $\$_m$ iff $s = 0:m$.
That is, on $\$_m$, $[\lambda_0^n, \ldots, \lambda_m^n] = [\lambda_0^m, \ldots, \lambda_m^m]$.
Hence, there is only one choice for the selection $s$ of $\Lambda^n$ to consider,
and this choice is $\Lambda^m = \Lambda^n(s)$.
Hence, the Lagrange basis is not complete.
\end{proof}

The next properties are characteristic of Bernstein bases,
which we consider later in this paper.

\begin{definition}
\
\begin{enumerate}
\item
Let $\Phi^n$ be a basis of finite-dimensional function space.
We say $\Phi^n$ is \emph{affine on $S$} if 
$\Phi^n(t) \alpha$ is an affine combination of $\alpha$
for all $t \in S$.
That is, $\sum_i \Phi^n_i(t) = 1$ for all $t\in S$.
If it holds for all $t\in\text{dom}(\Phi^n)$, then we say $\Phi^n$ is \emph{affine}.
\item
Let $E$ be a (real) matrix.
We say $E$ is \emph{row affine} if $\sum_j E(i,j)=1$ for all $i$,
and \emph{column affine} if $\sum_i E(i,j) = 1$ for all $j$.
\end{enumerate}
\end{definition}


\begin{lemma}
\
\label{lem1}
\begin{enumerate}
\item 
Matrix inversion preserves the property row-affine.
That is, the inverse of an invertible row-affine matrix is row-affine.
\item 
Matrix multiplication preserves the property row-affine.
That is, the product of two row-affine matrices is row-affine.
\end{enumerate}
\end{lemma}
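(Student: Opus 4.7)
The plan is to reformulate the row-affine condition as a single matrix-vector equation and then read both parts off immediately. Let $\mathbf{1}\in\RR^n$ denote the column vector of all ones. The condition $\sum_j E(i,j)=1$ for every $i$ is precisely the statement $E\mathbf{1}=\mathbf{1}$. Once this reformulation is in place, each part collapses to a one-line computation, so I would state the equivalence at the start of the proof and then dispatch the two parts in order.

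For part (2), suppose $A$ and $B$ are row-affine of compatible sizes. Then $(AB)\mathbf{1} = A(B\mathbf{1}) = A\mathbf{1} = \mathbf{1}$ by associativity and the hypotheses applied in succession, so $AB$ is row-affine. For part (1), suppose $E$ is invertible and row-affine, so $E\mathbf{1}=\mathbf{1}$. Left-multiplying by $E^{-1}$ gives $\mathbf{1} = E^{-1}E\mathbf{1} = E^{-1}\mathbf{1}$, which says exactly that $E^{-1}$ is row-affine. (One could alternatively prove (1) by viewing $\mathbf{1}$ as an eigenvector of $E$ with eigenvalue $1$; invertibility guarantees that $1$ is nonzero as an eigenvalue of $E^{-1}$ in the same direction.)

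There is no real obstacle here; the only subtlety worth flagging is that in part (1) invertibility is essential to be able to cancel $E$ on the left, and the reformulation $E\mathbf{1}=\mathbf{1}$ is what makes the argument transparent rather than a summation manipulation over indices. I would therefore keep the proof as short as the reformulation allows, rather than expand it into index-level bookkeeping.
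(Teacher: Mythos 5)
Your proof is correct and is essentially the paper's argument in compact form: the identity $(AB)\mathbf{1}=A(B\mathbf{1})$ and the cancellation $E^{-1}E\mathbf{1}=\mathbf{1}$ are exactly the paper's interchange of the double sums $\sum_j\sum_i$, just written as matrix--vector associativity instead of index bookkeeping. No gap; the reformulation $E\mathbf{1}=\mathbf{1}$ is a clean way to package the same computation.
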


\begin{proof}
\label{laffine}
For the first result, let $A$ be an $n\times n$ invertible row-affine matrix.
Then $\sum_{j=1}^n A(i,j) = 1$ for $i=1:n$.
Since $A^{-1}A = I$,
$$
1 = \sum_{j=1}^n I(k,j)
= \sum_{j=1}^n \sum_{i=1}^n A^{-1}(k,i) A(i,j)
= \sum_{i=1}^n A^{-1}(k,i) \sum_{j=1}^n A(i,j)
= \sum_{i=1}^n A^{-1}(k,i),
$$
for $k=1:n$.
Hence, $A^{-1}$ is row-affine.

For the second result, assume $C = AB$ with $A$ and $B$ row-affine of dimensions $m \times n$
and $n \times p$, respectively.
Then, $C$ is of dimension $m \times p$, and 
$$
\sum_{j=1}^p C(k,j)
 = \sum_{j=1}^p \sum_{i=1}^n A(k,i)B(i,j)
 = \sum_{i=1}^n A(k,i) \sum_{j=1}^p B(i,j)
 = \sum_{i=1}^n A(k,i) 
 = 1,
$$
for $k=1:m$.
Therefore, $C$ is row-affine.
\end{proof}

\begin{theorem}
\label{thm1}
Let $\Phi^m$ and $\Phi^n$ be affine bases of $Y$ and $X$, respectively, with $Y$ a subspace of $X$.
\begin{enumerate}
\item
If $\Phi^m = \Phi^nE$ for some matrix $E$, then $E$ is row affine.
\item
If, moreover, $E(s,:)$ is invertible for some selection $s : [1:m] \rightarrow [1:n]$,
then the dual basis $D^m = \Phi^mE(s,:)^{-1}$ exists and is affine.
\end{enumerate}
\end{theorem}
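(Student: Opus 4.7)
The plan is to reduce both parts to Lemma~\ref{lem1} together with the uniqueness of coordinates in the basis $\Phi^n$. For part (1), I would start from the affineness of $\Phi^m$, namely $\sum_j \Phi^m_j(t) = 1$, and expand via $\Phi^m = \Phi^n E$ to obtain
$$1 = \sum_j \Phi^m_j(t) = \sum_i \Phi^n_i(t)\Big(\sum_j E(i,j)\Big).$$
Since $\Phi^n$ is also affine, we also have $\sum_i \Phi^n_i(t)\cdot 1 = 1$. Because $\Phi^n$ is a basis, the coordinates of the constant function $1$ in this basis are unique, forcing $\sum_j E(i,j) = 1$ for every row $i$. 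Thus $E$ is row affine.

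For part (2), the matrix $E(s,:)$ is obtained by selecting $m$ rows of $E$, so each of its rows is a row of $E$ and still sums to $1$; hence $E(s,:)$ is row affine. Since by hypothesis $E(s,:)$ is invertible, Lemma~\ref{lem1}(1) yields that $A := E(s,:)^{-1}$ is row affine. Existence of the dual basis $D^m = \Phi^m A$ is then immediate from Proposition~\ref{prop1}(5), once one recalls that invertibility of $E(s,:)$ is equivalent, by Proposition~\ref{prop1}(4), to linear independence of $\Lambda^n(s)$ on $Y$. To verify that $D^m$ is affine, I would compute
$$\sum_j D^m_j(t) = \sum_j \sum_i \Phi^m_i(t)\,A(i,j) = \sum_i \Phi^m_i(t) \Big(\sum_j A(i,j)\Big) = \sum_i \Phi^m_i(t) = 1,$$
invoking in turn the row-affineness of $A$ and the affineness of $\Phi^m$.

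The argument is routine once Lemma~\ref{lem1} is in hand, and there is no serious obstacle. The one thing worth keeping straight is which side of $E$ the bases sit on: since $E$ acts on $\Phi^n$ from the right to produce $\Phi^m$, it is the row sums (not column sums) of $E$ that must be controlled, which is why the row-affine version of Lemma~\ref{lem1} is the one invoked rather than its column-affine analogue.
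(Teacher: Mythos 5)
Your proof is correct and follows essentially the same route as the paper: expand $\sum_j \Phi^m_j$ through $E$, conclude the row sums of $E$ are $1$ (the paper extracts them by applying the dual functionals $\lambda^n_k$, which is the same uniqueness-of-coordinates fact you invoke), then pass to $E(s,:)$, use Lemma~\ref{lem1} for row-affineness of the inverse, and sum the $D^m_i$. No gaps; your remark about row versus column sums matches the paper's convention $\Phi^m = \Phi^n E$.
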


\begin{proof}
For (1), it follows by affineness of the two bases that
$$
1 = \sum_i \Phi^m_i(t) = \sum_i \sum_j \Phi_j^n(t) E(j,i)
= \sum_j \Phi_j(t) \sum_i E(j,i).
$$
Let $\lambda_k^n$ be the functional on $X$ such that $\lambda_k \Phi^n_j = \delta_{ij}$.
Then,
$$
\lambda_k^n(1)  = \lambda_k^n(\sum_j \Phi_j^n) 
= \sum_j \lambda_k^n(\Phi_j^n)  
= \lambda_k^n(\Phi_k^n)  = 1,
$$
and so
$$
1 = \lambda_k^n(1)
= \lambda_k^n(\sum_j \Phi_j(t) \sum_i E(j,i))
= \sum_i E(k,i).
$$
This establishes (1).

For (2), we recall that
$D^m = \Phi^mE(s,:)^{-1}$ is the basis for $Y$ dual to $\Lambda^n(s)$
with $E = \Lambda^{nT} \Phi^m$, and hence $E(s,:) = \Lambda^n(s)^T \Phi^m$.
By part (1) of this theorem, $E(s,:)$ is row affine, and by 
Lemma \ref{laffine}, $A := (E(s,:))^{-1}$ is row affine as well.
Therefore, 
\begin{align*}
 \sum_{i=0}^n D^m_i 
 &= \sum_{i=0}^m \sum_{j=0:m} \Phi_j^m A(j,i) 
 = \sum_{j=0}^m B_j^m \sum_{i=0:m} A(j,i) 
 = \sum_{j=0}^m B_j^m  = 1.
\end{align*}
And so $D^m$ is an affine basis:
\end{proof}

These results regarding affine bases will concern the Bernstein basis, which we investigate
in the remaining sections, and in particular we show that dual bases are affine.
We remark here that the same is not true of convexity.
That is, the dual bases constructed are not convex.
Indeed, matrix inversion does not preserve convexity, as it does affineness.
Morever, as it turns out, certain approximation properties do not actually require convexity.
I.e., affineness is enough. Hence, our dual bases $D^m$ will enjoy many of the same
properties as $B^m$ do, in the Bernstein-basis setting, just not convexity.


In our construction we derive dual bases in terms of data maps
that are dual on a large space.
However, it is possible to have different data maps that are both dual to the same
bases.
For example, in terms of the point-evaluation function $\delta_xf = f(x)$
and derivative operator $D$,
both maps $\Lambda = [\delta_0, \ \delta_0D]$ and 
$\tilde\Lambda = [\delta_0, \ \delta_1-\delta_0]$
are dual to the power basis $\Phi = [1, (\cdot)]$,
as can be seen by $\Lambda^T\Phi = \tilde\Lambda^T\Phi = I$.
However, even with different data maps, dual bases are the same, as shown next.
This idea is important in constructing approximation operators where some
data maps may apply and not others.
More to the point, we will derive dual basis in this paper with respect to 
dual data maps that involve differentiation, hence do not apply on $C([0,1])$.
In the last section of this paper, we define new data maps that do not
involve differentiation to derive certain properties of approximation operators
on $C([0,1])$.

\begin{prop}
\label{prop3}
Let $\Lambda^n$ and $\tilde \Lambda^n$ be data maps on $X$ both dual to the basis $\Phi^n$.
Then both maps are equivalent on $X$, and the dual bases in a subspace $Y$ are invariant
of which dual map is used.
That is, if $D^m$ and $\tilde D^m$ are bases for the subspace $Y$ that are dual to $\Phi^m$
with respect to the selection $s$ and data maps $\Lambda^n$ and $\tilde \Lambda^n$, 
respectively.
Then, $D^m = \tilde D^m$.
\end{prop}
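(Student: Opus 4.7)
The plan is to reduce everything to the observation that duality with a basis of $X$ pins down the dual functionals uniquely on $X$, from which the equality of the subspace constructions follows immediately.

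First I would prove the equivalence of $\Lambda^n$ and $\tilde\Lambda^n$ on $X$. Since both are dual to $\Phi^n$, we have $\Lambda^{nT}\Phi^n = I = \tilde\Lambda^{nT}\Phi^n$. For any $f\in X$, write $f = \Phi^n\beta$ for a unique $\beta\in\RR^n$; then
$$
\lambda_i^n f = \lambda_i^n \Phi^n\beta = e_i^T\beta = \tilde\lambda_i^n \Phi^n\beta = \tilde\lambda_i^n f,
$$
so $\lambda_i^n = \tilde\lambda_i^n$ as elements of $X^*$ for every $i$. This is the first claim.

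Next I would push this equality through the subspace construction of Proposition \ref{prop1}. Since $Y\subset X$ and the columns of $\Phi^m$ lie in $Y$, the functionals in $\Lambda^n$ and $\tilde\Lambda^n$ act identically on each $\Phi_j^m$. Hence
$$
E = \Lambda^{nT}\Phi^m = \tilde\Lambda^{nT}\Phi^m = \tilde E,
$$
and in particular $E(s,:) = \tilde E(s,:)$ for any selection map $s$. By Proposition \ref{prop1}(5),
$$
D^m = \Phi^m E(s,:)^{-1} = \Phi^m \tilde E(s,:)^{-1} = \tilde D^m,
$$
which gives the conclusion.

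There is no real obstacle here, since the whole statement rests on the uniqueness of the dual basis to a given basis. The only point worth flagging carefully in the write-up is that while $\Lambda^n$ and $\tilde\Lambda^n$ may look quite different as formulas (e.g., one involving derivatives and the other finite differences, as in the motivating example $[\delta_0,\delta_0D]$ versus $[\delta_0,\delta_1-\delta_0]$), their equality as functionals is only guaranteed on $X$; outside of $X$ they may well disagree, which is precisely why the interpretation of $D^m$ as an approximation operator on a larger ambient space can depend on which representative one chooses, even though the basis itself does not.
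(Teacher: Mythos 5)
Your proposal is correct and follows essentially the same route as the paper: first the equivalence of $\Lambda^n$ and $\tilde\Lambda^n$ on $X$ via $\Lambda^{nT}\Phi^n = I = \tilde\Lambda^{nT}\Phi^n$, then the observation that the change-of-basis matrix $E(s,:)^{-1}$ in Proposition \ref{prop1} does not depend on which dual map is used. The only (cosmetic) difference is that you argue $E = \Lambda^{nT}\Phi^m = \tilde\Lambda^{nT}\Phi^m$ directly from the agreement of the functionals on $X\supset Y$, whereas the paper recomputes $A$ from $I = \Lambda^n(s)^T D^m = E(s,:)A$ and notes $A$ depends only on the embedding matrix $E$; these are the same argument.
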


\begin{proof}
To show that $\Lambda^n$ and $\tilde \Lambda^n$ are equivalent on $X$,
let $f = \Phi^n\alpha \in X$.
Then, since both maps are dual to $\Phi^n$, we have
$\Lambda^{nT} \Phi^n\alpha = \alpha$ and $\tilde \Lambda^{nT} \Phi^n\alpha = \alpha$.
Hence, they are equivalent on $X$.

The basis $D^m$ is dual to $\Lambda^{n}(s)$ in the sense that
$\Lambda^{nT}D^m = I$.
With this basis represented $D^m = \Phi^mA = \Phi^nEA$, for some embedding,
we get 
$$\Lambda^{nT}D^m = \Lambda^{nT} \Phi^nEA = EA.$$
Therefore,
$ I = \Lambda^{nT}(s) D^m = E(s,:)A,$
and so $A = E(s,:)^{-1}$,
which is depends only on the embedding $E$ and not the data map $\Lambda^n$.
Hence, if $\tilde A$ is the transformation matrix for $\tilde\Lambda^n$,
then $\tilde A = A$,
and so $D^m = \tilde D^m$.
\end{proof}

\section{Dual Bernstein Bases in the subspace $\$_m$ of $\$_n$}

Let $\$_n$ be the space of polynomials of degree at most $n$
and $\$_m$ the subspace of polynomials of degree at most $m$, for $m < n$.
In this section we derive bases for $\$_m$ that are dual to subsets of
the dual Bernstein functionals on $\$_n$.
We begin with some basic formulas involving Bernstein polynomials that will be used.
\begin{itemize}
\item
Bernstein basis for $\$_n$:
$B^n = [B_0^n, \ldots, B_n^n]$ with $B_i^n= \binom{n}{i}(1-\cdot)^{n-i} (\cdot)^i$,
$\sum_{i=0}^n B_i^n = 1$ and $B_i^n \geq 0$ on $[0,1]$.
That is, it forms a partition of unity on $[0,1]$. 
In particular, $B^n\alpha$ is a convex combination of $\alpha$ for all $t\in [0,1]$.

\item
Degree Elevation of Bernstein Basis (See \cite{F88, PBP2002}): 
$B^m = B^n E $ with $E$ the $(n+1) \times (m+1)$ matrix with entries
$$
E(i,j) = 
\dfrac{\binom{n-i}{m-j} \binom{i}{j}}{\binom{n}{m}} 
= \dfrac{\binom{n-m}{i-j} \binom{m}{j}}{\binom{n}{i}}
$$
for $0 \leq i \leq n$ and $0 \leq j \leq m$, with $E(i,j)=0$ if $i<j$ or $n-i<m-j$.
That is,
$$
B_j^m = \sum_{i=0}^n E(i,j) B_i^n
= \sum_{i=0}^n \dfrac{\binom{n-i}{m-j} \binom{i}{j}}{\binom{n}{m}} B_i^n.
$$

\item Dual Bernstein Functionals:
$\Lambda^n = [\lambda_0^n, \ldots, \lambda_n^n]$ with 
$$\lambda_{k}^n  = \sum_{j=0}^{k} \frac{\binom{k}{j}}{\binom{n}{j}} \frac{1}{j!} \delta_0 D^j
= \sum_{j=0}^{n-k} (-1)^j 
\dfrac{\binom{n-k}{j} }{\binom{n}{j}} \frac{1}{j!} \delta_1 D^j. $$
Therefore, $\lambda_k^nB_i^n = \delta_{ki}$ and $\Lambda^{nT}B^n = I$.
(Note that the two forms are equivalent on $\$_n$, but not on all spaces.)

\item Reduction of Dual Bernstein Basis: 
On $\$_m$, $\Lambda^{n} = \Lambda^m E^T$ (by Proposition \ref{prop2}).
Hence,
$$\lambda^n_k = \sum_{j=0}^m   E^T(j,k) \lambda^m_j
 = \sum_{j=0}^m  \dfrac{\binom{n-k}{m-j} \binom{k}{j}}{\binom{n}{m}} \lambda^m_j.$$

\end{itemize}

Hence, in the context of this paper, we have the following:


\begin{theorem}
Let $B^m$ be the Bernstein basis for $\$^m$,
and let $B^n$ be the Bernstein basis for $\$^n$ with dual map $\Lambda^n$ given above,
$m\leq n$.
Let $E$ be the degree elevation matrix.
Then,
\begin{enumerate}
\item The embedding $e$ of $(\$^m,B^m,\Lambda^m)$ into $(\$^n,B^n,\Lambda^n)$ is complete.
\item The basis for $\$^m$ dual to $\Lambda^n(s)$ can be represented $D^m = B^m E(s,:)^{-1}$,
with $D^m=B^m$ when $m=n$.
\end{enumerate}
\end{theorem}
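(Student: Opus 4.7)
The plan is to handle the two parts separately, with part (2) being essentially immediate and part (1) being the substantive content.

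For part (2), once part (1) is established, the formula $D^m = B^m E(s,:)^{-1}$ is a direct instance of Proposition \ref{prop1}(5) with $\Phi^m = B^m$. The special case $m=n$ corresponds to $E$ being the identity matrix (degree elevation from degree $n$ to itself), so $D^m = B^m$ trivially. Thus the entire burden is part (1): showing that $E(s,:)$ is invertible for every strictly increasing selection $s : [0:m]\to[0:n]$.

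For part (1), by Proposition \ref{prop1}(4) completeness is equivalent to $\det E(s,:) \neq 0$ for every such $s$. I would use the equivalent formula for the degree elevation entries given in the excerpt,
$$E(i,j) = \frac{\binom{n-m}{i-j}\binom{m}{j}}{\binom{n}{i}},$$
to factor row and column prefactors out of the determinant. This reduces the claim to showing that $\det M(s,:) > 0$, where $M(k,j) = \binom{n-m}{s(k)-j}$ for $0 \le k,j \le m$. The row factors $1/\binom{n}{s(k)}$ and column factors $\binom{m}{j}$ are all strictly positive, so they do not affect the sign or (non)vanishing.

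The main step is then showing the shifted-binomial determinant is positive. I would apply the Lindström--Gessel--Viennot lemma with $N := n-m$. Each entry $\binom{N}{s(k)-j}$ counts monotone lattice paths (using right and up unit steps) from the source $P_j = (-j, j)$ to the target $Q_k = (N-s(k), s(k))$, since such a path requires $N-s(k)+j$ right steps and $s(k)-j$ up steps, totaling $N$ steps. Because the sources $P_0, P_1, \ldots, P_m$ and the targets $Q_0, Q_1, \ldots, Q_m$ are both arranged in strict up-left order as the index increases, any non-intersecting system of paths must match source $j$ to target $j$; LGV then gives
$$\det M(s,:) = \#\{\text{non-intersecting path systems from } P_j \text{ to } Q_j, \ j = 0, \ldots, m\} \ge 0.$$

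The main obstacle is producing at least one such non-intersecting system so that the count is strictly positive. The input $0 \le s(0) < s(1) < \cdots < s(m) \le n$ forces $k \le s(k) \le N+k$ for each $k$, which guarantees that each source--target pair is individually joinable. I would then give an explicit greedy construction: let path $j$ climb (choose up) at each of its first $s(j)-j$ steps and then go straight for the remaining $N-(s(j)-j)$ steps, so that at layer $i$ its index equals $\min(j+i, s(j))$. A short case analysis on whether $i$ has exceeded the climbing threshold for each of paths $j < j'$ shows that path $j$'s index is strictly below path $j'$'s index at every intermediate layer, hence the system is non-intersecting. This produces a positive contribution to $\det M(s,:)$, establishing $\det E(s,:) > 0$ and hence completeness. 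An equivalent reformulation, which I would mention briefly, is to factor $E = E_{n-1,n}\cdots E_{m,m+1}$ into the elementary bidiagonal degree-elevation matrices and invoke LGV on the associated planar network of the product; the greedy construction above is precisely the corresponding non-intersecting routing in that network.
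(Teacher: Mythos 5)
Your proposal is correct, but it reaches part (1) by a genuinely different route than the paper. The paper changes over to the power basis: writing $P^nD_n = B^nT_n$ with $T_n$ Pascal's lower-triangular matrix, it computes $\Lambda^n(s)^TP^m = T^n(s,0:m)\,d^n(0:m,0:m)$ and then cites \cite{K12} for the fact that every $(m+1)$-row selection of the truncated Pascal matrix is invertible; completeness follows, and part (2) is read off from Proposition \ref{prop1}, exactly as in your argument. You instead stay entirely in the Bernstein/degree-elevation picture: using the second form $E(i,j)=\binom{n-m}{i-j}\binom{m}{j}/\binom{n}{i}$ you strip off strictly positive row and column factors and reduce to the minor $\det\bigl[\binom{n-m}{s(k)-j}\bigr]$, which you evaluate by Lindstr\"om--Gessel--Viennot as a count of non-intersecting lattice-path systems, and your greedy routing (path $j$ at height $\min(j+i,s(j))$ on layer $i$, strictly below path $j'$ for $j<j'$ in both cases of the comparison) exhibits one such system, so the minor is strictly positive. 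The trade-off: the paper's proof is shorter but outsources the key invertibility fact to the external reference \cite{K12}, whereas yours is self-contained, gives strict positivity of every such minor of $E$ (in effect the total-positivity property of degree elevation, which your closing remark about factoring $E$ into elementary bidiagonal elevations makes explicit), and the needed inequality $k\le s(k)\le (n-m)+k$ is exactly the condition that the diagonal entries be nonzero. One small point worth stating explicitly: the selection $s$ in the theorem is only assumed injective, so you should note that reordering it to be strictly increasing permutes rows of $E(s,:)$ and hence affects only the sign of the determinant, not invertibility; with that remark your argument covers the statement as given, including the trivial case $m=n$ where $E$ is the identity.
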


\begin{proof}
For (1), we need to show that $\Lambda^n(s)$ is linearly independent on $\$^m$ for any 
selection $s : [0:m] \rightarrow [0:n]$.
The conversion from between the power basis $P^n$ and Bernstein basis $B^n$ for $\$_n$
can be expressed
$$
P^nD_n = B^nT_n
$$
with
$D_n := \text{Diag}([\binom{n}{j} : j=0:n])$
and
$T_n$ Pascal's (lower triangular) matrix 
$$
T^n := [\binom{i}{j} : 0 \leq i,j \leq n]
=
\begin{bmatrix}
1 & 0 & 0 & \cdots & 0 \\
1 & 1 & 0 & \cdots & 0 \\
1 & 2 & 1 & \cdots & 0 \\
\vdots  & \vdots  & \vdots  & \ddots  & \vdots \\
\binom{n}{0} & \binom{n}{1} & \binom{n}{2} &  \cdots & \binom{n}{n} 
\end{bmatrix}.
$$
This follows directly from the identity
$$
\binom{n}{j} t^j = \sum_{i=0}^n \binom{i}{j} B_i^n
$$
(see \cite{PBP2002}, section 2.8, for a short proof).
Let $d^n := (D_n)^{-1} = [1/\binom{n}{j} : j=0:n]$.
Then, 
\begin{align*}
\Lambda^n(s)^T P^m 
&= (\Lambda^n I(:,s))^T P^n I(:,0:m)  \\
&= I(s,:) \Lambda^{nT} B^nT^nd^n I(:,0:m) \\
&= I(s,:) T^nd^n I(:,0:m) \\
&= T^n(s,0:m) d^n(0:m,0:m)
\end{align*}
In \cite{K12} it was shown that the truncated Pascal matrix $T^n(s,0:m)$ is invertible 
for any $m+1$ selection $s : [0:m] \rightarrow [0:n]$ of the rows of $T^n$.
Therefore, $\Lambda^n(s)^T P^m$ is invertible,
and so $\Lambda^n(s)$ is linearly independent on $\$_m$.

For (2), we have $B^m = B^nE$ with $E$ the degree elevation matrix.
Therefore, we are exactly in the framework of Proposition \ref{prop1},
with $\Phi^m = B^m$ and $\Phi^n=B^n$.
Since the embedding is complete, we have that $E(s,:)$ is invertible for any selection map $s$,
Therefore, the dual basis  of $\$^m$ dual to $\Lambda^n(s)$ exists for any selection map $s$,
and can bre represented
$D^m = D^m = B^m E(s,:)^{-1}$.
In the case $m=n$, there is only one selection $s = [1:n]$, 
and so $\Lambda^n(s) = \Lambda^n$.
Hence, $E(s,:)=E$ is the identity matrix, and $D^m = B^m = B^n$ are therefore dual to $\Lambda^n$.
\end{proof}

Since, as is well known, $B^m$ and $B^n$ are affine bases,
the next result follows directly from Theorem \ref{thm1}.
Instead, we give a more direct proof below using properties of the degree elevation matrix $E$.

\begin{theorem}
\label{thm2}
For any selection (injective) map $s : [0:m] \rightarrow [0:n]$,
$D^m := B^mA$ is an affine basis of $\$_m$ dual to $\Lambda^n(s)$,
with $A := E(s,:)^{-1}$.
\end{theorem}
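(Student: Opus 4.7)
The plan is to treat the three assertions separately: that $D^m$ is a basis of $\$_m$, that it is dual to $\Lambda^n(s)$, and that it is affine. The first two are essentially immediate from the preceding theorem, which already gave us completeness of the embedding (so $E(s,:)$ is invertible, hence $A$ exists and $D^m$ is a basis of $\$_m$) and the formula $E(s,:) = \Lambda^n(s)^T B^m$ from Proposition \ref{prop1}, which yields
$$\Lambda^n(s)^T D^m = \Lambda^n(s)^T B^m A = E(s,:) \, E(s,:)^{-1} = I.$$

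The actual content to be proved, then, is affineness of $D^m$. My plan is to derive this from row-affineness of $A$ as follows. Since for each $t$, $\sum_i D_i^m(t) = \sum_j B_j^m(t) \sum_i A(j,i)$, affineness of $D^m$ on the domain of $B^m$ is equivalent to the row sums of $A$ all being $1$, because $B^m$ is itself affine (in fact a partition of unity on $[0,1]$). So the task reduces to showing $A = E(s,:)^{-1}$ is row-affine.

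First I would establish that the full degree elevation matrix $E$ is row-affine via Vandermonde's identity: for each $i \in [0:n]$,
$$\sum_{j=0}^{m} E(i,j) = \frac{1}{\binom{n}{m}}\sum_{j=0}^{m} \binom{i}{j}\binom{n-i}{m-j} = \frac{1}{\binom{n}{m}}\binom{n}{m} = 1.$$
Next, since $E(s,:)$ is obtained from $E$ by selecting a subset of rows, row-affineness of $E$ passes immediately to $E(s,:)$. Then by Lemma \ref{lem1}(1), the inverse $A = E(s,:)^{-1}$ of this invertible row-affine matrix is itself row-affine, which completes the reduction.

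I don't anticipate a genuine obstacle: the only substantive ingredient is the Vandermonde identity applied to the degree elevation coefficients, together with the row-affineness machinery already developed in Lemma \ref{lem1}. The point of giving this direct argument (as opposed to citing Theorem \ref{thm1}) is to make visible that row-affineness of $E$ is a combinatorial fact about binomial coefficients, independent of the more abstract reasoning (invoking functionals $\lambda_k^n$ acting on $1$) used in the general proof.
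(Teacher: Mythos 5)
Your proposal is correct and follows essentially the same route as the paper: invertibility of $E(s,:)$ and the duality $\Lambda^n(s)^T D^m = E(s,:)A = I$ come from the completeness theorem and Proposition \ref{prop1}, and affineness is obtained by showing the degree elevation matrix is row-affine via the Chu--Vandermonde identity, transferring this to $A = E(s,:)^{-1}$ by Lemma \ref{lem1}, and summing against the partition of unity $\sum_j B_j^m = 1$. This is exactly the paper's ``direct'' proof, so there is nothing to add.
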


\begin{proof}
We note that
by the previous lemma $\Lambda^n(s)$ is linearly independent on $\$_m$.
Therefore, $\Lambda^n(s)^T B^m$ is invertible.
By Proposition \ref{prop1}, $\Lambda^n(s)^T B^m = E(s,:)$
and $D^m = B^m E(s,:)^{-1}$ is the basis for $\$_m$ dual to $\Lambda^n(s)$.
Recall that
$$
E(i,j) = \dfrac{\binom{n-i}{m-j} \binom{i}{j}}{\binom{n}{m}}.
$$
By the Chu-Vandermonde identity 
\begin{align*}
 \binom{n}{m} \sum_{j=0}^m E(i,j) = 
 \sum_{j=0}^m \binom{n-i}{m-j} \binom ij
 &= \sum_{j=0}^m \binom{n-i}{j} \binom{i}{m-j}
 = \binom{n}{m},
\end{align*}
and so $E(s,:)$ is row affine.
By Lemma \ref{laffine}, $(E(s,:))^{-1}$ is row affine as well.
Therefore, 
\begin{align*}
 \sum_{i=0}^n D^m_i 
 &= \sum_{i=0}^m \sum_{j=0:m} B_j^m A(j,i) 
 = \sum_{j=0}^m B_j^m \sum_{i=0:m} A(j,i) 
 = \sum_{j=0}^m B_j^m  = 1.
\end{align*}
And so $D^m$ is an affine basis:
\end{proof}

The next property is a generalization of the following property of
Bernstein polynomials:
$$
x = B^m(x)v^m = \sum_{i=0}^m \xi_i^m \, B_i^m (x)
$$
for all $x$, with $\xi^m_i := \frac{i}{m}$.

\begin{theorem}
\label{thm3}
Let $D^m = B^m E(s,:)^{-1}$ be the dual basis for some selection map $s$.
\begin{enumerate}
\item $E\xi^m = \xi^n$
\item $\xi^m = E(s,:)^{-1} \xi^n(s)$.
\item $x = B^m \xi^m = D^m(x) \xi^n(s) = \sum_{i=0}^m \xi_{s(i)}^n \, D_i^m (x) $.
\end{enumerate}
\end{theorem}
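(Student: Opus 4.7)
The plan is to prove the three parts in order, with part (1) being the key identity and parts (2) and (3) following by linear algebra from (1) and from the definition $D^m = B^m E(s,:)^{-1}$.

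For part (1), I would start from the two known identities $x = \sum_{j=0}^m \xi_j^m B_j^m(x)$ and $x = \sum_{i=0}^n \xi_i^n B_i^n(x)$, which express the monomial $x$ (in $\$_m \subset \$_n$) in the two Bernstein bases. Using the degree-elevation relation $B^m = B^n E$ on the first expression gives
\[
x \;=\; \sum_{j=0}^m \xi_j^m \sum_{i=0}^n E(i,j)\, B_i^n(x) \;=\; \sum_{i=0}^n \bigl(E\xi^m\bigr)_i B_i^n(x).
\]
Matching coefficients with the second expression (legitimate since $B^n$ is a basis of $\$_n$) yields $E\xi^m = \xi^n$. Alternatively, one could verify this entry-by-entry from the closed form for $E(i,j)$: the identity $\xi_i^n = \sum_{j=0}^m E(i,j)\,\xi_j^m$ reduces, after clearing the $\binom{n}{m}$ denominator and using $\xi^m_j = j/m$, to a standard Vandermonde-type binomial identity, but the coefficient-matching argument above is cleaner and avoids computation.

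For part (2), I would simply restrict the vector identity $E\xi^m = \xi^n$ to the rows indexed by $s$, obtaining $E(s,:)\,\xi^m = \xi^n(s)$. Since completeness of the embedding (established in the preceding theorem) ensures $E(s,:)$ is invertible for every selection map $s$, left-multiplying by $E(s,:)^{-1}$ gives $\xi^m = E(s,:)^{-1}\xi^n(s)$.

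For part (3), the chain of equalities falls out by substitution. The left identity $x = B^m(x)\,\xi^m$ is a standard Bernstein fact. Substituting the formula from part (2) for $\xi^m$ gives
\[
x \;=\; B^m(x)\, E(s,:)^{-1}\,\xi^n(s) \;=\; D^m(x)\,\xi^n(s) \;=\; \sum_{i=0}^m \xi_{s(i)}^n\, D_i^m(x),
\]
where the middle equality uses the definition $D^m = B^m E(s,:)^{-1}$. There is no genuine obstacle in this argument; the only subtle point is justifying that $E(s,:)$ is invertible, which has already been handled by completeness, so the proof is essentially a bookkeeping exercise once part (1) is in hand.
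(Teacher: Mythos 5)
Your proposal is correct and follows essentially the same route as the paper: establish $E\xi^m=\xi^n$ by expressing the function $x$ in both Bernstein bases via the degree-elevation relation $B^m=B^nE$, then restrict to the rows indexed by $s$ and invert $E(s,:)$, then substitute into $x=B^m(x)\xi^m$ to get part (3). The only cosmetic difference is that the paper verifies the degree-$n$ linear precision (in the form $\Lambda^{nT}x=\xi^n$, by computing $\lambda_k^n x = k/n$ from the explicit dual functionals), whereas you cite $x=\sum_{i=0}^n \xi_i^n B_i^n(x)$ as known and match coefficients in the basis $B^n$ --- logically the same step.
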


\begin{proof}
Recall that the Bernstein functions on $\$^n$ can be represented
$$\lambda_{k}^n  
  = \sum_{j=0}^{k} \frac{\binom{k}{j}}{\binom{n}{j}} \frac{1}{j!} \delta_0 D^j.
$$
In particular, $\lambda_0^n x = \delta_0 x = 0$.
For $k>0$, the sum of the terms for $\lambda^n_k x$ are zero except when $j=1$.
Hence,
$$
\lambda_{k}^n  x
  = \sum_{j=0}^{n} \frac{\binom{k}{j}}{\binom{n}{j}} \frac{1}{j!} \delta_0 D^j x
  = \frac{\binom{k}{1}}{\binom{n}{1}} \frac{1}{1!} \delta_0 D^1 x
  =  \frac{k}{n}.
$$
Therefore, $\Lambda^{nT} x = \xi^n$.
From this we get,
$$
\xi^n = \Lambda^{nT} x = \Lambda^{nT} B^m(x) \xi^m  = \Lambda^{nT} B^n(x) E \xi^m  = E \xi^m.
$$
Thereofore, $\xi^n = E \xi^m$.
Hence, $\xi^n(s) = E(s,:) \xi^m$ implies $\xi_m = E(s,:)^{-1} \xi^n(s)$,
and 
$$
D^m(x) \xi^n(s) = B^m(x) E(s,:)^{-1} \xi^n(s) = B^m(x) \xi_m = x.
$$
\end{proof}

\section{Plotting Polynomials in the Dual Bases}

Since dual bases $D^m$ are bases for the subspace $Y$,
and function in $Y$ can be represented by this basis.
In particular, in the Berntstein setup above,
any polynomial $p \in S^m$ can be represented as $p= D^m\alpha$
for some $\alpha\in\RR^{n+1}$.
And so, this basis can be used in computation with functions
in this polynomial space.

Now, since $D^m = B^m A$ with $A = E(s,:)^{-1}$,
we can write $p = B^m(A\alpha)$.
Hence, one can transform the coefficients $\alpha$ by the matrix $A$,
and then use B-form techniques in computation.
In particular, to plot the curves, one can use DeCasteljau's algortihm on the control polygon
with points $(\frac{i}{m}, (A\alpha)(i))$, for $i=0:m$.

This is depicted in Figure \ref{f2}.
The control polygon for the coefficients $\alpha$ is displayed in solid broken line
and the transformed control polygon is dashed.

\begin{figure}[H]
\includegraphics[width=3in]{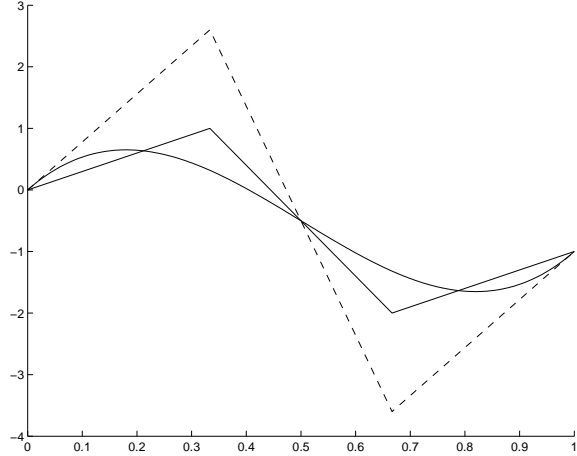}
\caption{Control polygon and transformed polygon for deCasteljau's algorithm with
dual Bernstein subspace bases.
}
\label{f2}
\end{figure}

\section{Symmetric Bernstein Class of Dual Bases}

As shown in the previous section, 
for Bernstein functionals the matrix $E(s,:) = \Lambda(s)^{nT} B^m$ is 
invertible for any selection map $s$,
and we can therefore find dual bases for any selection map.
Moreover, these bases are affine.
In this section we will use this idea to produce a certain ``symmetric'' class of bases,
and show that these converge to the Lagrange polynomial basis.
We also provide an estimate for the rate of convergence.

For $k\in\NN$, let $s(i) = ik$ for $i=0:m$. Hence, $s = 0:k:n$ with $n = k*m$.
For example:
\begin{itemize}
\item If $k=1$ then $m=m$ and we get $s = [0:m]$.
\item If $m=4$ and $k=3$, then $n = 12$ and $s = [0,3,6,9,12]$.
\end{itemize}
The dual Bernstein bases are then $D^m_k = B^m A_{m,k}$
with $A_{m,k} = E(s,:)^{-1}$.
The goal in the remainder of this section is
to show that symmetric dual bases converge to point evaluation in a certain sense.
To get the most general result, we extend the Bernstein functional to allow $k = x$ to be any real number:
$$\lambda_{x}^n := \sum_{j=0}^{\lfloor |x| \rfloor} \frac{\binom{x}{j}}{\binom{n}{j}} 
 \frac{1}{j!} \delta_0 D^j.
$$
Here, the factorials in the binomial coefficients will involve the gamma function when $x$ is a non-integer.
The functionals reduce to the above formulation when $k := x$ is a non-negative integer with $k \leq n$.
For the following, we use the \emph{falling factorial} notation
$$
(x)_j := x \cdot(x-1) \cdots (x-j+1).
$$

\begin{lemma}
\label{lem3}
For $x \in \RR$ and $j \geq 0$,
$$
\lim_{n \rightarrow \infty} \frac{\binom{xn}{j}}{\binom{n}{j}} =x^j,
$$
with $(\cdot)! := \Gamma(\cdot+1)$ for non-integer factorials.
\end{lemma}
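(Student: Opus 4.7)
The plan is direct: reduce both binomial coefficients to falling factorials, form the ratio, and observe that each factor limits to $x$.

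First I would dispose of the trivial case $j=0$, where both sides equal $1$. For $j \geq 1$, using the falling factorial notation $(y)_j = y(y-1)\cdots(y-j+1)$ introduced just above the lemma, I would note that for any real $y$ and any nonnegative integer $j$, the gamma-function-based definition of $\binom{y}{j}$ agrees with $(y)_j / j!$ (this is the standard extension, and is what the convention $(\cdot)! := \Gamma(\cdot+1)$ is set up to deliver). Applying this with $y = xn$ and $y = n$ gives
$$
\frac{\binom{xn}{j}}{\binom{n}{j}} = \frac{(xn)_j / j!}{(n)_j / j!} = \frac{(xn)_j}{(n)_j} = \prod_{i=0}^{j-1} \frac{xn - i}{n - i}.
$$

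Next I would take the limit. For each fixed $i \in \{0, 1, \ldots, j-1\}$,
$$
\lim_{n \to \infty} \frac{xn - i}{n - i} = \lim_{n \to \infty} \frac{x - i/n}{1 - i/n} = x.
$$
Since this is a finite product of $j$ factors each converging to $x$, the product converges to $x^j$, establishing the claim.

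The only real subtlety is the identification of $\binom{xn}{j}$ with $(xn)_j/j!$ when $xn$ is not an integer; this rests on the functional equation $\Gamma(z+1) = z\,\Gamma(z)$, which telescopes $\Gamma(xn+1)/\Gamma(xn-j+1)$ into the product $xn(xn-1)\cdots(xn-j+1) = (xn)_j$. Given that, no step presents a genuine obstacle; the argument is just the elementary limit of a finite product. I would mention this reduction explicitly so the reader is not left wondering how the gamma function disappears.
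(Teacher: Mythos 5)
Your proposal is correct and follows essentially the same route as the paper: both reduce $\binom{xn}{j}$ to the falling factorial $(xn)_j/j!$ via the functional equation $\Gamma(z+1)=z\Gamma(z)$, write the ratio as the finite product $\prod_{i=0}^{j-1}\frac{xn-i}{n-i}$, and pass to the limit factor by factor. Your explicit treatment of the $j=0$ case and your remark on how the gamma function telescopes away are welcome clarifications but not a different argument.
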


\begin{proof}
By the well-known property $\Gamma(z+1) = z \Gamma(z)$, we get that
\begin{align*}
j!\binom{xn}{j} &= \frac{\Gamma(xn+1)}{\Gamma(xn-j+1)} 
= \frac{ (xn)_j \Gamma(xn-j+1)}{\Gamma(xn-j+1)} 
= (xn)_j.
\end{align*}
Therefore,
\begin{align*}
\lim_{n \rightarrow \infty} \frac{\binom{xn}{j}}{\binom{n}{j}} 
= \lim_{n \rightarrow \infty} \frac{xn}{n} \frac{xn-1}{n-1} \cdots \frac{xn-j+1}{n-j+1}  
= x^j.
\end{align*}
\end{proof}

\begin{prop}
Let $x \in \RR$ and $p \in \$_m$ for some $m$.
The dual functionals converge to point evaluation in the following sense
$$
\lim_{n \rightarrow \infty} \lambda_{xn}^n p = p(x).
$$
In particular, with $n = mk$ and $x = \dfrac{i}{m}$,
$$
\lim_{k \rightarrow \infty} \lambda_{ik}^{km} p = p(\frac{i}{m}).
$$
\end{prop}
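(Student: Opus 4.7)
The plan is to use the explicit formula for $\lambda_{xn}^n$, observe that for polynomials of degree at most $m$ the defining sum truncates, pass to the limit term-by-term using Lemma \ref{lem3}, and recognize the resulting expression as a Taylor expansion.

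First I would substitute $x \mapsto xn$ in the extended definition to obtain
$$
\lambda_{xn}^n p = \sum_{j=0}^{\lfloor |xn| \rfloor} \frac{\binom{xn}{j}}{\binom{n}{j}} \frac{1}{j!} \delta_0 D^j p.
$$
Since $p \in \$_m$, we have $D^j p \equiv 0$ for $j > m$. Moreover, for $x \neq 0$ and $n$ sufficiently large we have $\lfloor |xn| \rfloor \geq m$, so the sum effectively has upper limit $m$:
$$
\lambda_{xn}^n p = \sum_{j=0}^{m} \frac{\binom{xn}{j}}{\binom{n}{j}} \frac{p^{(j)}(0)}{j!}.
$$
(The case $x=0$ is trivial since then only the $j=0$ term survives and equals $p(0)$.)

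Next I would apply Lemma \ref{lem3}, which gives $\binom{xn}{j}/\binom{n}{j} \to x^j$ as $n \to \infty$, to each of the finitely many terms. This yields
$$
\lim_{n\to\infty} \lambda_{xn}^n p = \sum_{j=0}^{m} \frac{p^{(j)}(0)}{j!} x^j = p(x),
$$
where the last equality is simply the Taylor expansion of $p$ about $0$, which is exact since $\deg p \leq m$.

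The second statement follows immediately by specialization: with $n=mk$ and $x=i/m$, we have $xn = ik$, so the general result gives $\lambda_{ik}^{km} p \to p(i/m)$ as $k \to \infty$. The main (and really only) obstacle is checking that the truncation of the sum to $j \leq m$ is legitimate in the limit, which is handled by the observation that for any fixed nonzero $x$ the floor $\lfloor |xn|\rfloor$ eventually exceeds $m$; the rest is a direct application of Lemma \ref{lem3} and Taylor's theorem for polynomials.
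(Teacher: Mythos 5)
Your proposal is correct and follows essentially the same route as the paper: substitute into the extended formula for $\lambda_{xn}^n$, truncate the sum at $j=m$ since $p^{(j)}(0)=0$ for $j>m$, apply Lemma \ref{lem3} termwise, and identify the limit with the (exact) Taylor expansion of $p$ at $0$. Your extra care about the floor $\lfloor |xn|\rfloor$ eventually exceeding $m$ and the trivial case $x=0$ only makes explicit what the paper leaves implicit.
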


\begin{proof}
Let $p(x) = \sum_{j=0}^m \alpha_j x^j$ in $\$_m$.
Then, $p^{(j)}(0) = j!\alpha_j$ if $j \leq m$ and $0$ otherwise, 
and so by the previous lemma we have
\begin{align*}
\lim_{n \rightarrow \infty} \lambda_{xn}^n p 
 = \lim_{n \rightarrow \infty} \sum_{j=0}^{ \lfloor |xn| \rfloor} 
  \frac{\binom{xn}{j}}{\binom{n}{j}} \frac{1}{j!} p^{(j)}(0) 
 = \lim_{n \rightarrow \infty} \sum_{j=0}^m \frac{\binom{xn}{j}}{\binom{n}{j}} \alpha_j
 = \sum_{j=0}^m \alpha_j \lim_{n \rightarrow \infty} \frac{\binom{xn}{j}}{\binom{n}{j}}  
 = \sum_{j=0}^{m} \alpha_j x^j 
= p(x).
\end{align*}
\end{proof}

\begin{cor}
Let $s = 0:k:km$ and $D^m_k = B^mA_k$ with $A_k = E(s,:)^{-1}$.
Then, $D^m_k \rightarrow L^m$ with $L^m$ the Lagrange basis.
\end{cor}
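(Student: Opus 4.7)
The plan is to exploit the representation $D^m_k = B^m A_k$ with $A_k = E(s,:)^{-1}$ and show that $A_k$ converges to the corresponding change-of-basis matrix for the Lagrange basis.

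First I would unwrap the matrix $E(s,:)$. By part (3) of Proposition \ref{prop1}, its $(i,j)$ entry is $\lambda_{ik}^{km} B_j^m$, for $i,j = 0,\ldots,m$. Since each $B_j^m$ lies in $\$_m$, the previous proposition applied with $p = B_j^m$ and $x = i/m$ yields
$$
\lim_{k\to\infty} E(s,:)(i,j) \;=\; \lim_{k\to\infty} \lambda_{ik}^{km} B_j^m \;=\; B_j^m\!\left(\tfrac{i}{m}\right).
$$
Denote by $M$ the $(m+1)\times(m+1)$ matrix with entries $M(i,j) = B_j^m(i/m)$; this is the collocation matrix of the Bernstein basis at the $m+1$ distinct nodes $\xi_0^m,\ldots,\xi_m^m$. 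Because polynomial interpolation at distinct nodes is unisolvent, $M$ is invertible.

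Next I would identify $M^{-1}$ with the transition matrix from $B^m$ to $L^m$. Writing $L^m = B^m Q$, the Lagrange duality $L_j^m(i/m) = \delta_{ij}$ becomes $M Q = I$, hence $Q = M^{-1}$. Since matrix inversion is continuous at every invertible matrix, the entrywise convergence $E(s,:) \to M$ gives $A_k = E(s,:)^{-1} \to M^{-1} = Q$. Therefore
$$
D^m_k \;=\; B^m A_k \;\longrightarrow\; B^m M^{-1} \;=\; L^m,
$$
with convergence holding coefficient by coefficient in the fixed basis $B^m$, and hence uniformly on $[0,1]$ since $B^m$ is a fixed collection of bounded continuous functions.

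The only potentially delicate point is ensuring that the functional-level convergence $\lambda_{ik}^{km}p \to p(i/m)$ lifts to entrywise convergence of $E(s,:)$, but this is immediate because the entries are literally values of those functionals on the fixed polynomials $B_j^m \in \$_m$. Once that is in hand, invertibility of the limiting collocation matrix $M$ is standard, and continuity of inversion closes the argument; no further estimates are needed beyond what is already established.
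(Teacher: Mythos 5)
Your proof is correct and takes essentially the route the paper intends: the corollary is presented as a direct consequence of the preceding proposition, with the dual functionals $\lambda_{ik}^{km}$ converging to point evaluation at $i/m$, and your matrix-level argument (entrywise convergence of $E(s,:)$ to the Bernstein collocation matrix $M=[B_j^m(i/m)]$, invertibility of $M$ by unisolvence, continuity of inversion, and $L^m=B^mM^{-1}$) simply makes the implicit linear-algebra step explicit. Indeed, the paper's later rate-of-convergence section uses exactly this identification $A^{-1}=\Delta^TB^m=[B_j^m(i/m)]$, quantifying the convergence you establish qualitatively.
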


In figure \ref{f1}, we display the dual bases for various degree polynomial spaces
and level of refinement to illustrate convergence to the Lagrange basis.

\begin{figure}[H]
\includegraphics[width=7in]{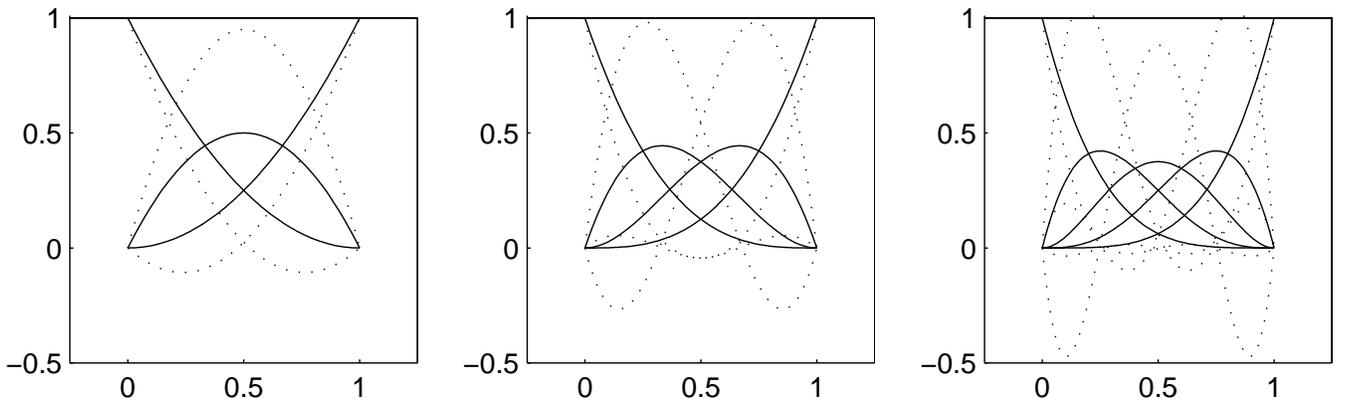}
\caption{Dual Bases $D_k^m$ of degree $m=2$, $3$ and $4$, for $k=1$ (solid)
and $10$ (dotted), respectively.
Note that $k=1$ is the Bernstein basis, and $k=10$ is close to the Lagrange basis.
}
\label{f1}
\end{figure}

\section{Rate of Convergence of Symmetric Configuration to Lagrange Interpolation}

In this section we determine the rate of convergence for this symmetric configuration to Lagrange interpolation,
which moreover provides an alternate proof or convergence to Lagrange interpolation.
Recall that $D_k^m = B^m A_k$ is the basis for $\$_m$ that is dual to $\Lambda^{mk}(s^k)$
with $s^k_i = ik$ for $i=0:m$, and $A_k = E(s^k,:)^{-1}$ with $E = (\Lambda^{mk})^T B^m$.
Let $L^m$ be the Lagrange basis for $\$_m$ dual to point evaluation at $\frac{i}{m}$ for $i=0:m$,
and let $A$ be the matrix such that $L^m = B^m A$.
Then, we have the following:

\begin{lemma}
\label{lem4}
For $h$ small:
\begin{enumerate}
\item
$(\alpha-\ell h)\cdots(\alpha-n h) 
= \alpha^{n-\ell+1} - \dfrac12 \alpha^{n-\ell}  (n+\ell)(n+1-\ell)h + O(h^2).$
\item
$\dfrac{a-bh+O(h^2)}{c-dh+O(h^2)} = \dfrac{a}{c} + \dfrac{ad-bc}{c^2} h + O(h^2)$.
\end{enumerate}
\end{lemma}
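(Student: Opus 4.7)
The plan is to prove both identities by direct asymptotic expansion in $h$, retaining only the constant and linear terms and absorbing everything else into the $O(h^2)$ remainder.

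For part (1), I would expand the product $\prod_{j=\ell}^{n}(\alpha - jh)$ of $n-\ell+1$ linear factors. The $h^0$ coefficient comes from selecting $\alpha$ in every factor, giving $\alpha^{n-\ell+1}$. The $h^1$ coefficient arises by picking the $-jh$ summand in exactly one factor and $\alpha$ from the remaining $n-\ell$ factors, yielding $-\alpha^{n-\ell}\sum_{j=\ell}^{n} j$. Applying the arithmetic series formula
\[
\sum_{j=\ell}^{n} j = \frac{(n+\ell)(n-\ell+1)}{2}
\]
produces the claimed coefficient $-\tfrac12\alpha^{n-\ell}(n+\ell)(n+1-\ell)$. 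Every remaining monomial in the expansion involves at least two of the factors $-j_kh$ and so contributes to $O(h^2)$.

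For part (2), I would factor $c$ out of the denominator and apply a first-order geometric expansion. Writing
\[
\frac{a - bh + O(h^2)}{c - dh + O(h^2)} = \frac{1}{c}\bigl(a - bh + O(h^2)\bigr)\left(1 - \tfrac{d}{c}h + O(h^2)\right)^{-1},
\]
and using $(1-u)^{-1} = 1 + u + O(u^2)$ with $u = (d/c)h + O(h^2)$ turns the final factor into $1 + (d/c)h + O(h^2)$. Multiplying out and keeping only terms of order at most $h$ gives
\[
\frac{a}{c} - \frac{b}{c}h + \frac{ad}{c^2}h + O(h^2) = \frac{a}{c} + \frac{ad-bc}{c^2}h + O(h^2),
\]
as required.

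The ``main obstacle'' is little more than careful bookkeeping of the $O(h^2)$ tails, since both statements are low-order Taylor-type expansions. In part (1) the remainder is manifestly $O(h^2)$ because any monomial coming from two or more of the shifted factors carries an explicit $h^2$. In part (2) it is $O(h^2)$ because the tail of the geometric series is $O(u^2) = O(h^2)$, and the cross-products between the numerator's $O(h^2)$ piece and the expanded denominator remain $O(h^2)$ after division by $c$. No further machinery is needed.
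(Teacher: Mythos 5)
Your proposal is correct and follows essentially the same route as the paper: for (1) you extract the linear coefficient as $-\alpha^{n-\ell}\sum_{j=\ell}^{n} j$ and evaluate the arithmetic sum (the paper writes it as $\binom{n+1}{2}-\binom{\ell}{2}$, which is the same quantity), and for (2) your explicit geometric-series expansion is just the Maclaurin expansion the paper invokes. No gaps.
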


\begin{proof}
Part (2) follows by a Maclaurin's expansion.
For (1), there are $n-\ell+1$ terms. 
On expanding in powers of $h$, we have
\begin{align*}
(\alpha-\ell h) \cdots(\alpha-n h)  
&= \alpha^{n-\ell+1} -\alpha^{n-\ell} (\ell + \cdots + n) h + O(h^2) \\
&= \alpha^{n-\ell+1} -\alpha^{n-\ell}  \Big[ \binom{n+1}{2} - \binom{\ell}{2}\Big]h + O(h^2) \\
&= \alpha^{n-\ell+1} -\alpha^{n-\ell}   \frac{(n+1)n - \ell(\ell-1)}{2} h + O(h^2) \\
&= \alpha^{n-\ell+1} - \frac12 \alpha^{n-\ell}  (n+\ell)(n+1-\ell)h + O(h^2). 
\end{align*}
\end{proof}

\begin{lemma}
\label{lem5}
For the symmetric configuration,
$ A^{-1}(i,j)  - A_k^{-1}(i,j)  = 0$ if $i=0$ or $m$.
Otherwise,
$$ A^{-1}(i,j)  - A_k^{-1}(i,j)  = C_{ij} \frac{1}{k} + O(\frac{1}{k^2}) $$
as $k\rightarrow \infty$, 
with
$$
C_{ij} = \frac12 B_j^m\Big(\frac{i}{m}\Big)  
\Big[ \underbrace{\frac{(j-1)j(m-i)}{im}}_{\text{if} \ j>0}
    +\underbrace{\frac{(m-j)(2mj-im+i-ij)}{m(m-i)}}_{\text{if}\ j<m} \Big].
$$
\end{lemma}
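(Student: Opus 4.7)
The plan is to compute both $A^{-1}(i,j)$ and $A_k^{-1}(i,j)$ in closed form and then extract the $O(1/k)$ coefficient of their difference via the asymptotic toolkit provided by Lemma \ref{lem4}.

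First, I would identify the matrix entries explicitly. From $L^m = B^m A$ together with the Lagrange property $L^m_j(i/m) = \delta_{ij}$, the matrix $A^{-1}$ is just the evaluation matrix $A^{-1}(i,j) = B^m_j(i/m) = \binom{m}{j}(m-i)^{m-j} i^j / m^m$. On the other hand, because $A_k = E(s^k,:)^{-1}$ with $s^k_i = ik$, the inverse is read off directly from the degree-elevation matrix,
$$A_k^{-1}(i,j) = E(ik, j) = \frac{\binom{(m-i)k}{m-j}\binom{ik}{j}}{\binom{mk}{m}}.$$
The boundary cases follow by inspection: when $i = 0$ the factor $\binom{ik}{j} = \binom{0}{j} = \delta_{0j}$ forces $A_k^{-1}(0,j) = \delta_{0j} = A^{-1}(0,j)$ for every $k$, and symmetrically $\binom{(m-i)k}{m-j} = \binom{0}{m-j} = \delta_{mj}$ handles $i = m$; in both situations the difference vanishes identically, as claimed.

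For the interior case $0 < i < m$, I would set $h = 1/k$ and factor $k$ out of each falling product to get
$$\binom{ik}{j} = \frac{k^j}{j!}\prod_{\ell=0}^{j-1}(i - \ell h), \qquad \binom{(m-i)k}{m-j} = \frac{k^{m-j}}{(m-j)!}\prod_{\ell=0}^{m-j-1}\bigl((m-i) - \ell h\bigr),$$
together with an analogous length-$m$ product for $\binom{mk}{m}$. Applying Lemma \ref{lem4}(1) to each of the three products expands it to first order in $h$; the $k^m$ factors then cancel in the quotient, and Lemma \ref{lem4}(2) converts the ratio into a single series
$$A_k^{-1}(i,j) = \frac{\binom{m}{j}}{m^m}\bigl(P + Q\,h + O(h^2)\bigr),$$
whose leading coefficient $P = i^j (m-i)^{m-j}$ reproduces $A^{-1}(i,j) = B^m_j(i/m)$ — consistent with the convergence already established — while $Q$ is an explicit linear combination of the three first-order corrections coming from the two numerator binomials and from the reciprocal of $\binom{mk}{m}$.

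The main obstacle is the algebraic cleanup of $Q$. After substituting the coefficients produced by Lemma \ref{lem4}(1), one must factor $B^m_j(i/m)$ out as a common prefactor and split the residual rational expression into the two displayed pieces: a $j(j-1)$-term coming from the first-order correction in the expansion of $\binom{ik}{j}$, which vanishes precisely when $j = 0$ — matching the annotation \emph{if $j>0$} — and an $(m-j)$-weighted term arising from the remaining two corrections, which vanishes when $j = m$ — matching \emph{if $j<m$}. Clearing these two contributions to the common denominators $im$ and $m(m-i)$ displayed in the statement then produces the stated closed form for $C_{ij}$, with one last sign/consistency check against a small case (e.g.\ $m=2$, $i=j=1$, where $A_k^{-1}(1,1) = k/(2k-1)$ admits a direct expansion) to confirm the overall constant.
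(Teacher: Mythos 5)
Your plan is methodologically the same as the paper's proof: identify $A^{-1}(i,j)=B_j^m(i/m)$ from Lagrange duality, $A_k^{-1}(i,j)=E(ik,j)$ from the degree-elevation matrix (you use the equivalent first form of $E$), dispose of the rows $i=0,m$ by inspection, and for $0<i<m$ expand in $h=1/k$ using Lemma \ref{lem4}; whether you expand three falling products and divide once, or group them into two quotients as the paper does, is immaterial. All of that setup is correct.

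The step that fails is the final assertion that the cleanup of $Q$ ``produces the stated closed form for $C_{ij}$.'' It does not: carrying your expansion through (with $a=i^j(m-i)^{m-j}$, $c=m^m$, and the first-order corrections from Lemma \ref{lem4}(1)) gives
$$
A^{-1}(i,j)-A_k^{-1}(i,j)=\frac12\,B_j^m\Big(\frac{i}{m}\Big)\Big[\frac{j(j-1)}{i}+\frac{(m-j)(m-j-1)}{m-i}-(m-1)\Big]\frac1k+O\Big(\frac{1}{k^2}\Big),
$$
and one checks by clearing denominators that this bracket equals the one displayed in the lemma only if the second underbraced term there is taken with a \emph{minus} sign. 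The discrepancy originates in the identity $\frac{m+j-1}{m}-\frac{m-j-1}{m-i}=+\frac{2mj-im+i-ij}{m(m-i)}$ (not the negative of it), which is exactly where the paper's own derivation slips; so the displayed $C_{ij}$ carries a sign error in its second term. Your own proposed sanity check exposes this rather than confirming it: for $m=2$, $i=j=1$ you have $A_k^{-1}(1,1)=\frac{k}{2k-1}=\frac12+\frac{1}{4k}+O(\frac{1}{k^2})$, hence $A^{-1}(1,1)-A_k^{-1}(1,1)=-\frac{1}{4k}+O(\frac{1}{k^2})$, whereas the stated formula gives $C_{11}=+\frac14$. So you cannot finish by ``splitting $Q$ into the two displayed pieces''; you must either report the corrected constant (the bracket above, equivalently the displayed one with the $j<m$ term negated) or explicitly flag the sign discrepancy in the statement. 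Apart from this, the boundary cases and the $O(1/k)$ machinery in your proposal are sound.
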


\begin{proof}
The Lagrange basis is dual to the point evaluation map
$\Delta := [\delta_{\frac{i}{m}} : i=0:m]$,
giving $ I = \Delta^T L^m = \Delta^T B^m A $.
Therefore,
$$A^{-1} = \Delta^TB^m = [B^m_j(\frac{i}{m})].$$
We also have $A_k^{-1} = E_k(s^k,:)$  with $E$ the degree elevation matrix.
Hence,
\begin{align*}
A^{-1}(i,j)  - A_k^{-1}(i,j) = 
B^m_j(\frac{i}{m}) - E(ik,j) 
   &= B^m_j(\frac{i}{m})  - \dfrac{\binom{mk-m}{ik-j} \binom{m}{j}}{\binom{mk}{ik}}.
\end{align*}
It is easily checked that this vanishes for $i=0$ or $i=m$ (provided $k>0)$.
Hence, we only need to consider $0<i<m$.
In this case, $ik>j$ and $mk-m \geq ik-j$ for $k \geq m$,
which, since $k \rightarrow \infty$, we can assume as well.
Factoring out $\binom{m}{j}$, we can rewrite the second term as
\begin{align*}
\dfrac{1}{\binom{m}{j}} E(ik,j) 
& = 
\dfrac{\binom{mk-m}{ik-j} }{\binom{mk}{ik}}  
  = \dfrac{(ik)!}{(ik-j)!} \dfrac{(mk-m)!}{(mk)!} \dfrac{(mk-ik)!}{(mk-ik-m+j)!} \\
  &= \dfrac{(ik) \cdots (ik-j+1)}{(mk) \cdots (mk-j+1)} 
   \cdot \dfrac{(mk-ik) \cdots (mk-ik-m+j+1)}{(mk-j) \cdots (mk-m+1)} \\
  &= 
  \underbrace{\dfrac{i \cdots (i-\frac{j-1}{k})}{m \cdots (m-\frac{j-1}{k})} }_{\text{appears\ if}\ j>0}
   \cdot 
  \underbrace{\dfrac{(m-i) \cdots (m-i-\frac{m-j-1}{k})}
               {(m-\frac{j}{k}) \cdots (m-\frac{m-1}{k})} }_{\text{appears\ if}\ j<m}.
\end{align*}
By Lemma \ref{lem4}, when $j>0$, we have
\begin{align*}
 \dfrac{i \cdots (i-\frac{j-1}{k})}{m\cdots (m-\frac{j-1}{k})} 
&= \dfrac{i^j - \frac12 i^{j-1} (j-1)j \frac{1}{k} +O(\frac{1}{k^2})} 
   {m^j - \frac12 m^{j-1} (j-1)j \frac{1}{k} +O(\frac{1}{k^2})} \\
&= \dfrac{i^j}{m^j}+\dfrac{\frac12 i^jm^{j-1}(j-1)j-\frac12 m^ji^{j-1} 
   (j-1)j}{(m^j)^2} \dfrac{1}{k}+O(\frac{1}{k^2}) \\
&= \Big(\dfrac{i}{m}\Big)^j
  - \dfrac12 \Big(\dfrac{i}{m}\Big)^j \frac{(j-1)j(m-i)}{im}  
   \dfrac{1}{k}+O(\frac{1}{k^2}) \\
&= \Big(\dfrac{i}{m}\Big)^j \Big[1-\dfrac12\frac{(j-1)j(m-i)}{im}\dfrac{1}{k}\Big]
  +O(\frac{1}{k^2}).
\end{align*}
Likewise, when $j<m$,
\begin{align*}
 &\dfrac{(m-i) \cdots (m-i-\frac{m-j-1}{k})} {(m-\frac{j}{k}) \cdots (m-\frac{m-1}{k})} 
= \dfrac{ (m-i)^{m-j} - \frac12 (m-i)^{m-j-1}  (m-j-1)(m-j) \frac{1}{k} + O(\frac{1}{k^2}) }
   {m^{m-j} - \frac12 m^{m-j-1} (m+j-1)(m-j) \frac{1}{k} + O(\frac{1}{k^2})} \\
&= \Big(\dfrac{m-i}{m}\Big)^{m-j} + \dfrac{\frac12(m-i)^{m-j}m^{m-j-1} (m+j-1)(m-j)
     - \frac12 m^{m-j} (m-i)^{m-j-1}(m-j-1)(m-j)}{m^{2(m-j)}} \frac{1}{k} + O(\frac{1}{k^2}) \\
&= \Big(1-\frac{i}{m}\Big)^{m-j} + 
 \frac12\Big[\big(\frac{m-i}{m}\big)^{m-j} \frac{(m+j-1)(m-j)}{m} - 
 \big(\frac{m-i}{m}\big)^{m-j} \frac{(m-j)(m-j-1)}{m-i} \Big] \frac{1}{k} + O(\frac{1}{k^2}) \\
&= \Big(1-\frac{i}{m}\Big)^{m-j} + \frac12\big(\frac{m-i}{m}\big)^{m-j} (m-j)
 \Big[ \frac{m+j-1}{m} - \frac{m-j-1}{m-i} \Big] \frac{1}{k} + O(\frac{1}{k^2}) \\
&= \Big(1-\frac{i}{m}\Big)^{m-j} \Big[1 - \frac12 
 \frac{(m-j)(2mj-im+i-ij)}{m(m-i)} \frac{1}{k}\Big] + O(\frac{1}{k^2}).
\end{align*}
Multiplying these two terms together and by $\binom{m}{j}$ gives us, when $0<j<m$,
\begin{align*}
E(ik,j) &= 
\binom{m}{j} \dfrac{i \cdots (i-\frac{j-1}{k})}{m\cdots (m-\frac{j-1}{k})} 
  \cdot
 \dfrac{(m-i) \cdots (m-i-\frac{m-j-1}{k})} {(m-\frac{j}{k}) \cdots (m-\frac{m-1}{k})}  \\
&=
B_j^m\Big(\frac{i}{m}\Big) 
\Big[1-\dfrac12\frac{(j-1)j(m-i)}{im}\dfrac{1}{k}\Big]
\Big[1 - \frac12 \frac{(m-j)(2mj-im+i-ij)}{m(m-i)} \frac{1}{k}\Big] + O(\frac{1}{k^2}) \\
&= B_j^m\Big(\frac{i}{m}\Big) - \frac12 B_j^m\Big(\frac{i}{m}\Big) \Big[ \frac{(j-1)j(m-i)}{im}
    +\frac{(m-j)(2mj-im+i-ij)}{m(m-i)} \Big] \frac{1}{k} + O(\frac{1}{k^2}). 
\end{align*}
For $j=0$,
\begin{align*}
E(ik,0) &= B_0^m\Big(\frac{i}{m}\Big) + \frac12 B_0^m\Big(\frac{i}{m}\Big) 
     \frac{(m-1)i}{m-i} \frac{1}{k} + O(\frac{1}{k^2}).
\end{align*}
For $j=m$,
\begin{align*}
E(ik,m) &= B_m^m\Big(\frac{i}{m}\Big)  - \frac12 B_m^m \Big(\frac{i}{m}\Big)  
     \frac{(m-1)(m-i)}{i} \frac{1}{k} + O(\frac{1}{k^2}).
\end{align*}

Putting this all together, we get:
$$ A^{-1}(i,j)  - A_k^{-1}(i,j)  = C_{ij} \frac{1}{k} + O(\frac{1}{k^2})
$$
with
$$
C_{ij} = \frac12 B_j^m\Big(\frac{i}{m}\Big)  
\Big[ \underbrace{\frac{(j-1)j(m-i)}{im}}_{\text{if} \ j>0}
    +\underbrace{\frac{(m-j)(2mj-im+i-ij)}{m(m-i)}}_{\text{if}\ j<m} \Big].
$$
\end{proof}

\begin{theorem}
Let $p=L^m\alpha$ and $p_k = D^{m,k}\alpha$
with $L^m = B^mA$ the Lagrange basis
and $D^{m,k} = B^mA_k$ with $A_k = E(s_k,:)^{-1}$ the dual basis 
for the symmetric configuration given in the previous section.
Then,
$$
\norm{p - p_k}_{[0,1]} = 
  \Big(\norm{A}^2_\infty \norm{C}_\infty \frac{1}{k} + O(\frac{1}{k^2}) \Big) \norm{\alpha},
$$
and
$$
\norm{L_i(t) - D^{m,k}_i}_{[0,1]} 
  = \norm{A}_\infty^2 \norm{C}_\infty \frac{1}{k} + O(\frac{1}{k^2}).
$$
\end{theorem}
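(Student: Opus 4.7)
The plan is to reduce the functional bound to the matrix bound $\norm{A - A_k}_\infty$, and then to express $A - A_k$ in terms of $A^{-1} - A_k^{-1}$ so that Lemma \ref{lem5} applies directly. Since $B^m$ is a nonnegative partition of unity on $[0,1]$, for any coefficient vector $\beta$ and any $t \in [0,1]$ we have $|B^m(t)\beta| \le \norm{\beta}_\infty$. Writing $p - p_k = B^m(A-A_k)\alpha$ and applying this to $\beta = (A-A_k)\alpha$ yields
$$\norm{p-p_k}_{[0,1]} \le \norm{(A-A_k)\alpha}_\infty \le \norm{A - A_k}_\infty \, \norm{\alpha}_\infty.$$
The second estimate of the theorem follows by applying the same inequality to the $i$-th column of $A - A_k$, giving $\norm{L_i^m - D_i^{m,k}}_{[0,1]} \le \max_j |(A-A_k)(j,i)| \le \norm{A - A_k}_\infty$.

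Next I would convert the matrix difference into a difference of inverses via the elementary identity
$$A - A_k \;=\; A\,(A_k^{-1} - A^{-1})\,A_k,$$
which is immediate from expanding $A A_k^{-1} A_k - A A^{-1} A_k$. Submultiplicativity of $\norm{\cdot}_\infty$ then gives
$$\norm{A - A_k}_\infty \le \norm{A}_\infty \, \norm{A_k^{-1} - A^{-1}}_\infty \, \norm{A_k}_\infty.$$
Lemma \ref{lem5} provides the entrywise expansion $A^{-1}(i,j) - A_k^{-1}(i,j) = C_{ij}/k + O(1/k^2)$, with constants uniform on the finite index set $0 \le i,j \le m$. Taking row sums preserves the expansion, so $\norm{A_k^{-1} - A^{-1}}_\infty = \norm{C}_\infty / k + O(1/k^2)$.

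It remains to show $\norm{A_k}_\infty = \norm{A}_\infty + O(1/k)$. Setting $\delta_k := \norm{A_k^{-1} - A^{-1}}_\infty$ and using the same identity, the triangle inequality gives $\norm{A_k}_\infty \le \norm{A}_\infty + \norm{A}_\infty \, \delta_k \, \norm{A_k}_\infty$. For $k$ large enough that $\norm{A}_\infty \delta_k < 1$, this rearranges to $\norm{A_k}_\infty \le \norm{A}_\infty / (1 - \norm{A}_\infty \delta_k) = \norm{A}_\infty + O(1/k)$. Multiplying the three factors and keeping track of orders gives
$$\norm{A - A_k}_\infty \le \norm{A}_\infty^2 \norm{C}_\infty \, \frac{1}{k} + O\!\left(\frac{1}{k^2}\right),$$
from which both claimed bounds follow.

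The main obstacle is bookkeeping of the error terms. Because Lemma \ref{lem5} is stated entrywise, I must verify that the implicit $O(1/k^2)$ constants are uniform across the finite index set, so that they survive the row-sum taken to pass from entries to the $\norm{\cdot}_\infty$ matrix norm. One must then chain this with the factor $\norm{A_k}_\infty = \norm{A}_\infty(1 + O(1/k))$ carefully, ensuring the multiplicative error contributes only at order $1/k^2$ and does not corrupt the leading constant $\norm{A}_\infty^2 \norm{C}_\infty$. The rest of the argument is a clean application of partition-of-unity and submultiplicativity.
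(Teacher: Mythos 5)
Your proposal follows essentially the same route as the paper: bound $\norm{p-p_k}$ by $\norm{A-A_k}_\infty\norm{\alpha}_\infty$ via the partition of unity, use the identity $A-A_k = A(A_k^{-1}-A^{-1})A_k$ with submultiplicativity, and invoke Lemma \ref{lem5} for $\norm{A^{-1}-A_k^{-1}}_\infty = \norm{C}_\infty/k + O(1/k^2)$. Your Neumann-series argument that $\norm{A_k}_\infty = \norm{A}_\infty + O(1/k)$ is in fact a slightly more careful justification than the paper's appeal to $A_k \to A$, but it is the same argument in substance.
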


\begin{proof}
From
$A_k-A = A(A^{-1} - A_k^{-1}) A_k$,
we get
$$
\norm{A_k-A} 
\leq
\norm{A}\,\norm{A^{-1} - A_k^{-1}}\, \norm{A_k}.
$$
Since, as we proved in the previous section, that $D^{m,k}$ converges to 
the Lagrange basis $L^m$, we know that $A_k \rightarrow A$.
Hence, $\limsup_k \norm{A_k}_\infty \norm_{A}_\infty$.
We also know that 
$$\norm{A^{-1}-A_k^{-1}}_\infty = \norm{C}_\infty \frac{1}{k} + O(\frac{1}{k^2}).$$
Therefore, 
$$\norm{A_k-A}_\infty = \norm{A}_\infty^2 \norm{C}_\infty \frac{1}{k} + O(\frac{1}{k^2}.$$
Now, let $p = L^m\alpha = B^mA\alpha$ and $p_k = D^{m,k}\alpha = B^mA_k\alpha$
for $\alpha \in \RR^{m+1}$.
Then,
\begin{align*}
\norm{p - p_k}_{[0,1]} &= \norm{B^m(A-A_k)\alpha}_{[0,1]} \\
 &= \max_{t\in [0,1]} \norm{B^m(t) (A-A_k)\alpha} \\
  &\leq \max_{t\in [0,1]} \norm{B^m(t)}_\infty \norm{A-A_k}_\infty \norm{\alpha}_\infty \\
  &= \norm{A-A_k}_\infty \norm{\alpha}_\infty\\
  &= \Big(\norm{A}_\infty^2 \norm{C}_\infty \frac{1}{k} + O(\frac{1}{k^2})\Big)
   \norm{\alpha}_\infty.
\end{align*}
In particular, for $\alpha = e_i$ the standard unit vector with $1$ in the i-th slot,
we get
$$\norm{L_i(t) - D^{m,k}_i}_{[0,1]} 
  = \norm{A}_\infty^2 \norm{C}_\infty \frac{1}{k} + O(\frac{1}{k^2}).
$$
\end{proof}


\section{Basis Transformations for Symmetric Class}

Recall that for the symmetric configuration introduced in the previous section
the dual basis is represented in terms of the Bernstein basis $D^{m,k} = B^m A_{m,k}$
for some $m \times m$ matrices $A_{m,k}$.
It seems rather challenging to explicitly characterize all these transformation
matrices for arbitrary $m$ and $k$,
however, for the convenience of the reader we'll list out the first several here.
To make things more compact, we define the following notation:
$km := k-m$, $nkm := nk-m$, $pk^2nkm=pk^2-nk+m$,
and $qk^3pk^2nkm=qk^3-pk^2+nk-m$.

\vskip 10pt
\begin{tiny}
\thinmuskip=0mu \medmuskip=0mu \thickmuskip=0mu
\begin{math}
A_{2,k}= 
\dfrac{1}{2!k}
\begin{bmatrix}
   2\,k & 0 & 0\\
  -\left( k1\right)  & 2\,\left( 2k1\right)  & -\left( k1\right) \\
   0 & 0 & 2\,k
\end{bmatrix}.
\end{math}

\vskip 10pt

\begin{math}
A_{3,k} = 
\dfrac{1}{2 \cdot 3!k^2}
\begin{bmatrix}
  12\,{k}^{2} & 0 & 0 & 0\cr 
  -2\,\left( k1\right) \,\left( 5k1\right)  & 6\,\left( 2k1\right) \,\left( 3k1\right)  
      & -6\,\left( k1\right) \,\left( 3k1\right)  & 2\,\left( k1\right) \,\left( 2k1\right) \cr 
  2\,\left( k1\right) \,\left( 2k1\right)  & -6\,\left( k1\right) \,\left( 3k1\right)  
      & 6\,\left( 2k1\right) \,\left( 3k1\right)  & -2\,\left( k1\right) \,\left( 5k1\right) \cr 
  0 & 0 & 0 & 12\,{k}^{2}\end{bmatrix}.
\end{math}

\vskip 10pt

\begin{math}
A_{4,k} = 
\dfrac{1}{3 \cdot 4! k^3}
\begin{bmatrix}72\,{k}^{3} & 0 & 0 & 0&0 \cr
-3\,\left( k1\right) \,\left( 26k^29k1\right)  & 12\,\left( 2k1\right) \,\left( 3k1\right) \,\left( 4k1\right)  & -18\,\left( k1\right) \,\left( 3k1\right) \,\left( 4k1\right)  & 12\,\left( k1\right) \,\left( 2k1\right) \,\left( 4k1\right)  & -3\,\left( k1\right) \,\left( 2k1\right) \,\left( 3k1\right) \cr 
4\,\left( k1\right) \,\left( 13k^212k2\right)  & -32\,\left( k1\right) \,\left( 2k1\right) \,\left( 4k1\right)  & 24\,\left( 4k1\right) \,\left( 5k^26k2\right)  & -32\,\left( k1\right) \,\left( 2k1\right) \,\left( 4k1\right)  & 4\,\left( k1\right) \,\left( 13k^212k2\right) \cr 
-3\,\left( k1\right) \,\left( 2k1\right) \,\left( 3k1\right)  & 12\,\left( k1\right) \,\left( 2k1\right) \,\left(4k1\right)  & -18\,\left( k1\right) \,\left( 3k1\right) \,\left( 4k1\right)  & 12\,\left( 2k1\right) \,\left( 3k1\right) \,\left( 4k1\right)  & -3\,\left( k1\right) \,\left( 26k^29k1\right) \cr 
0 & 0 & 0 & 0 & 72\,{k}^{3}\end{bmatrix}.
\end{math}

\vskip 10pt

\begin{math}
\thinmuskip 0mu
\medmuskip 0mu
\thickmuskip 0mu
A_{5,k} = 
\dfrac{1}{4 \cdot 5! k^4}
\begin{bmatrix} 480\,{k}^{4} & 0 & 0 & 0 \\
  && 0 & 0\cr 
  -4\,\left( k1\right) \,\left( 7k1\right) \,\left( 22k^27k1\right)  
  & 20\,\left( 2k1\right) \,\left( 3k1\right) \,\left( 4k1\right) \,\left( 5k1\right)  
  & -40\,\left( k1\right) \,\left( 3k1\right) \,\left( 4k1\right) \,\left( 5k1\right)  
  & 40\,\left( k1\right) \,\left( 2k1\right) \,\left( 4k1\right) \,\left( 5k1\right) \cr
  && -20\,\left( k1\right) \,\left( 2k1\right) \,\left( 3k1\right) \,\left( 5k1\right)  
  & 4\,\left( k1\right) \,\left( 2k1\right) \,\left( 3k1\right) \,\left( 4\,k1\right) \cr 
2\,\left( k1\right) \,\left( 269k^3331k^2109k11\right)  
  & -10\,\left( k1\right) \,\left( 2k1\right) \,\left( 5k1\right) \,\left( 29k11\right)  
  & 20\,\left( 5k1\right) \,\left( 59k^3101k^259k11\right)  
  & -20\,\left( k1\right) \,\left( 2k1\right) \,\left( 5k1\right) \,\left( 23k11\right)  \cr
  && 10\,\left( k1\right) \,\left( 5k1\right) \,\left(37k^242k11\right)  
  & -2\,\left( k1\right) \,\left( 2k1\right) \,\left( 77k^272k11\right) \cr 
-2\,\left(k1\right) \,\left(2k1\right) \,\left( 77k^272k11\right)  
   & 10\,\left( k1\right) \,\left( 5k1\right) \,\left( 37k^242k11\right)  
  & -20\,\left( k1\right) \,\left( 2k1\right) \,\left( 5k1\right) \,\left( 23k11\right)  
  & 20\,\left( 5k1\right) \,\left( 59k^3101k^259k11\right)  \cr
  && -10\,\left( k1\right) \,\left( 2k1\right) \,\left( 5k1\right) \,\left( 29k11\right)  
  & 2\,\left( k1\right) \,\left( 269k^3331k^2109k11\right) \cr 
 4\,\left( k1\right) \,\left( 2k1\right) \,\left( 3k1\right) \,\left( 4\,k1\right)  
  & -20\,\left( k1\right) \,\left( 2k1\right) \,\left( 3k1\right) \,\left( 5\,k1\right)  
  & 40\,\left( k1\right) \,\left( 2k1\right) \,\left( 4k1\right) \,\left( 5\,k1\right)  
  & -40\,\left( k1\right) \,\left( 3k1\right) \,\left( 4k1\right) \,\left( 5\,k1\right)  \cr
  && 20\,\left( 2k1\right) \,\left( 3k1\right) \,\left( 4k1\right) \,\left( 5\,k1\right)  
  & -4\,\left( k1\right) \,\left( 7k1\right) \,\left(22k^27k1\right) \cr 
  0 & 0 & 0 & 0 \cr
  && 0 & 480\,{k}^{4}\end{bmatrix}.
\end{math}

\end{tiny}

In particular, for $m=2$ and $k = 2:5$:

\begin{tiny}
$$
\frac{1}{4}\begin{bmatrix}4 & 0 & 0\cr -1 & 6 & -1\cr 0 & 0 & 4\end{bmatrix},
\frac{1}{6}\begin{bmatrix}6 & 0 & 0\cr -2 & 10 & -2\cr 0 & 0 & 6\end{bmatrix},
\frac{1}{8}\begin{bmatrix}8 & 0 & 0\cr -3 & 14 & -3\cr 0 & 0 & 8\end{bmatrix},
\frac{1}{10}\begin{bmatrix}10 & 0 & 0\cr -4 & 18 & -4\cr 0 & 0 & 10\end{bmatrix}.
$$
\end{tiny}

For $m=3$ and $k=2:5$:

\begin{tiny}
$$
\frac{1}{48}\begin{bmatrix}48 & 0 & 0 & 0\cr -18 & 90 & -30 & 6\cr 6 & -30 & 90 & -18\cr 0 & 0 & 0 & 48\end{bmatrix},
\frac{1}{108}\begin{bmatrix}108 & 0 & 0 & 0\cr -56 & 240 & -96 & 20\cr 20 & -96 & 240 & -56\cr 0 & 0 & 0 & 108\end{bmatrix},
\frac{1}{192}\begin{bmatrix}192&0 & 0 & 0\cr -114 & 462 & -198 & 42\cr 42 & -198 & 462 & -114\cr 0 & 0 & 0 & 192\end{bmatrix},
\frac{1}{300}\begin{bmatrix}300 & 0 & 0 & 0\cr -192 & 756 & -336 & 72\cr 72 & -336 & 756 & -192\cr 0 & 0 & 0 & 300\end{bmatrix}.
$$
\end{tiny}

For $m=4$ and $k=2:4$:

\begin{tiny}
$$
\frac{1}{576}
\begin{bmatrix}576 & 0 & 0 & 0 & 0\cr -261 & 1260 & -630 & 252 & -45\cr 120 & -672 & 1680 & -672 & 120\cr -45 & 252 & -630 & 1260 & -261\cr 0 & 0 & 0 & 0 & 576\end{bmatrix}, \quad
\frac{1}{1944}\begin{bmatrix}1944 & 0 & 0 & 0 & 0\cr 
-1248 & 5280 & -3168 & 1320 & -240\cr 
664 & -3520 & 7656 & -3520 & 644\cr 
-240 & 1320 & -3168 & 5280 & -1248\cr 
0 & 0 & 0 & 0 & 1944\end{bmatrix},
$$
$$
\frac{1}{4608}\begin{bmatrix}4608 & 0 & 0 & 0 & 0\cr -3429 & 13860 & -8910 & 3780 & -693\cr 1944 & -10080 & 20880 & -10080 & 1944\cr -693 & 3780 & -8910 & 13860 & -3429\cr 0 & 0 & 0 & 0 & 4608\end{bmatrix}.
$$
\end{tiny}

For $m=5$ and $k=2:3$:

\begin{tiny}
$$
\frac{1}{38880}\begin{bmatrix}38880 & 0 & 0 & 0 & 0 & 0\cr -28480 & 123200 & -98560 & 61600 & -22400 & 3520\cr 18400 & -106400 & 238000 & -162400 & 61040 & -9760\cr -9760 & 61040 & -162400 & 238000 & -106400 & 18400\cr 3520 & -22400 & 61600 & -98560 & 123200 & -28480\cr 0 & 0 & 0 & 0 & 0 & 38880\end{bmatrix},
$$
$$
\frac{1}{122880}\begin{bmatrix}122880 & 0 & 0 & 0 & 0 & 0\cr -105300 & 438900 & -376200 & 239400 & -87780 & 13860\cr 74070 & -418950 & 906300 & -646380 & 247950 & -40110\cr -40110 & 247950 & -646380 & 906300 & -418950 & 74070\cr 13860 & -87780 & 239400 & -376200 & 438900 & -105300\cr 0 & 0 & 0 & 0 & 0 & 122880\end{bmatrix}
$$
\end{tiny}

%

\section{Quasi-Interpolation}

In this section we derive some approximation results for our dual basis functions,
similar to what is done in \cite{SL2007} in the multivariate setting.
To do so, we will redefine the Bernstein basis and dual functionals over a general interval.
\begin{itemize}
\item
Let $B^n$ with 
$$B_i^n = \binom{n}{i} \Big(\frac{b-\cdot}{b-a}\Big)^{n-i} \Big(\frac{\cdot-a}{b-a}\Big)^i$$
be the Bernstein basis over $[a,b]$, with dual functionals
$$
\lambda_{k}^n  = \sum_{j=0}^{k} 
  \frac{\binom{k}{j}}{\binom{n}{j}} \frac{(b-a)^j}{j!} \delta_a D^j.
$$
Hence, $\Lambda^{nT}B^n = I$.
\item
Let $D^m$ be the basis for $\$^m$ dual to $\Lambda^n(s)$ for some selection $s$.
That is, $\Lambda^{nT}(s) D^m = I$.
Then, $D^m = B^m A$ with $A = E(s,:)^{-1}$ where $B^n=B^mE$.
\item
Let $L^m$ be the Lagrange basis at the points $\xi^m = [a+\frac{i}{m} (b-a) : i=0:m]$
with dual map $\Delta^m = [\delta_{\xi_0^m}, \ldots, \delta_{\xi_m^m}]$.
Hence, $\Delta^{mT}L^m = I$.
Then, for $B^m = L^mM_m$ with $M_m = \Delta{^mT}B^m$, is the basis transformation.
Let $\tilde\Lambda^m = \Delta^m M_m^{-T}$.
\end{itemize}

The purpose of the map $\tilde \Lambda^m$ is two-fold.
First, it is dual to $B^m$, as we show next.
Second, it does not involve any derivative evaluations,
which makes it suitable for the approximation of functions in $C[a,b]$.

\begin{lemma}
$\tilde \Lambda^n$ is dual to $B^n$.
\end{lemma}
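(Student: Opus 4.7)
The plan is to verify the duality relation $\tilde\Lambda^{mT} B^m = I$ by unwinding the definitions mechanically. There is essentially no calculation to do; the whole content of the lemma is bookkeeping with transposes, so the main task is to arrange the factors in the right order and make sure the relevant matrix is invertible.

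First I would observe that transposing $\tilde\Lambda^m = \Delta^m M_m^{-T}$ yields $\tilde\Lambda^{mT} = M_m^{-1} \Delta^{mT}$. Next, I would invoke the definition $M_m = \Delta^{mT} B^m$, which is built into the setup (it is the change-of-basis matrix expressing the Bernstein basis in terms of the Lagrange basis, since $B^m = L^m M_m$ and applying $\Delta^{mT}$ to both sides gives $\Delta^{mT} B^m = \Delta^{mT} L^m M_m = I \cdot M_m = M_m$). Combining the two identities,
\[
\tilde\Lambda^{mT} B^m = M_m^{-1} \Delta^{mT} B^m = M_m^{-1} M_m = I,
\]
which is the required duality.

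The only nontrivial point is justifying the use of $M_m^{-1}$, i.e. showing $M_m$ is invertible. I would argue this from the fact that $B^m$ is a basis for $\$^m$ and $\Delta^m$ is the Lagrange dual basis (its functionals are point evaluations at the $m+1$ distinct nodes $\xi_i^m = a + \tfrac{i}{m}(b-a)$, hence linearly independent on $\$^m$); consequently $M_m = \Delta^{mT} B^m$ is a product of two non-singular matrices when viewed as change-of-basis. Equivalently, one may note that $B^m = L^m M_m$ together with $B^m$ and $L^m$ both being bases of $\$^m$ forces $M_m$ to be invertible.

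There is no real obstacle here; the lemma is essentially a definitional consequence. The note after the statement in the paper that $\tilde\Lambda^m$ avoids derivatives (and is therefore suitable on $C[a,b]$) is the payoff, but it plays no role in the proof itself. I would keep the proof to the two-line computation above plus a one-sentence remark on invertibility of $M_m$.
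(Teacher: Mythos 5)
Your proof is correct and is essentially the same two-line computation as the paper's: transpose $\tilde\Lambda = \Delta M^{-T}$ to get $M^{-1}\Delta^{T}$ and use $M = \Delta^{T}B$ so that $\tilde\Lambda^{T}B = M^{-1}M = I$. Your added remark on the invertibility of $M$ is a harmless extra that the paper leaves implicit.
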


\begin{proof}
$$\tilde \Lambda^{nT}B^n
 = \big(\Delta^{n} M^{-T}\big)^T B^n 
 = M^{-1} \Delta^{nT}  B^n 
 = M^{-1} M = I.
$$
\end{proof}

\begin{theorem} (Stability)
Let $p = D^m\alpha \in \$^m$.
Then,
$$
\dfrac{1}{\norm{M_m^{-1}}_\infty} \norm{\alpha}_\infty
\leq \norm{p}_{[a,b]} \leq \norm{A}_\infty \norm{\alpha}_\infty.
$$
\end{theorem}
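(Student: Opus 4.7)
The plan is to handle the two inequalities separately, in both cases by pushing $p$ through the chain of basis transformations $D^m = B^m A$ and $B^m = L^m M_m$ set up just above the statement.

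For the upper bound I would rewrite $p = D^m\alpha = B^m(A\alpha)$. On $[a,b]$ the Bernstein basis is non-negative with $\sum_i B_i^m(t) = 1$, so for each $t \in [a,b]$ the scalar $p(t)$ is a convex combination of the entries of the vector $A\alpha$. This gives the pointwise estimate $|p(t)| \leq \|A\alpha\|_\infty \leq \|A\|_\infty \|\alpha\|_\infty$, and taking the supremum in $t$ yields $\|p\|_{[a,b]} \leq \|A\|_\infty \|\alpha\|_\infty$.

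For the lower bound I would invoke the lemma just established, that $\tilde\Lambda^m = \Delta^m M_m^{-T}$ is dual to $B^m$. Writing $p = B^m(A\alpha)$ and applying $\tilde\Lambda^{mT}$, the Bernstein coefficient vector is recovered by sampling $p$ at the Lagrange nodes:
$$A\alpha \;=\; \tilde\Lambda^{mT} p \;=\; M_m^{-1}\,\Delta^{mT} p \;=\; M_m^{-1}\bigl[p(\xi_j^m)\bigr]_{j=0}^m.$$
Bounding $|p(\xi_j^m)| \leq \|p\|_{[a,b]}$ componentwise produces $\|A\alpha\|_\infty \leq \|M_m^{-1}\|_\infty \|p\|_{[a,b]}$, which, after identifying the coefficient vector being controlled, is the left-hand inequality of the theorem.

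The structure is pleasingly symmetric: the upper bound exploits that $B^m$ is bounded above by its coefficients in the convex-combination sense, while the lower bound exploits the dual fact that Bernstein coefficients are bounded by point values of $p$ at the equally spaced Lagrange nodes, with $\|M_m^{-1}\|_\infty$ measuring the cost of that change of basis. The main point of care will be the final passage between $\|A\alpha\|_\infty$ and $\|\alpha\|_\infty$ in the lower bound; this is where the affineness of $D^m$ established in Theorem~\ref{thm2} is needed to justify the identification without introducing an extraneous $\|A^{-1}\|_\infty$ factor. Once that is in hand the two inequalities combine into the stated two-sided stability bound.
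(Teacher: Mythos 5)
Your upper bound is exactly the paper's, and your route for the lower bound (recover the Bernstein coefficients of $p$ from its values at the Lagrange nodes and invert $M_m$) is also essentially the paper's, since $\tilde\Lambda^{mT}=M_m^{-1}\Delta^{mT}$ and the paper simply applies $\Delta^{mT}$ directly. The gap is in your final step. From $A\alpha=M_m^{-1}\Delta^{mT}p$ the only way to reach $\alpha$ is $\alpha=A^{-1}(A\alpha)$, which does introduce the factor you hoped to avoid:
$$
\norm{\alpha}_\infty \;\le\; \norm{A^{-1}}_\infty\,\norm{A\alpha}_\infty \;\le\; \norm{A^{-1}}_\infty\,\norm{M_m^{-1}}_\infty\,\norm{p}_{[a,b]}.
$$
Your appeal to the affineness of $D^m$ (Theorem \ref{thm2}) cannot justify dropping that factor: affineness of $D^m$ is the statement that the rows of $A$ sum to $1$, which is a property of $A$, not of $A^{-1}$, and in any case unit row sums do not control an $\infty$-norm (the matrices $A_{m,k}$ tabulated in the paper are row-affine, yet for instance $A_{2,2}=\frac14\begin{bmatrix}4&0&0\\-1&6&-1\\0&0&4\end{bmatrix}$ has $\norm{A_{2,2}}_\infty=2$).

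What actually closes the argument, and what the paper uses, is that $A^{-1}=E(s,:)$ is a row-submatrix of the degree elevation matrix: its entries are nonnegative and each row sums to $1$ (the Chu--Vandermonde computation in the proof of Theorem \ref{thm2}), so $\norm{A^{-1}}_\infty=1$ exactly. The extraneous factor is therefore not avoided but evaluated, and it is harmless. Note that nonnegativity is essential here; row-affineness alone (which is all that survives inversion, by Lemma \ref{lem1}) would not give the norm bound. With this one correction your argument is complete and coincides with the paper's proof.
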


\begin{proof}
For the upper bound:
\begin{align*}
\norm{p}_{[0,1]} &= 
\norm{D^m\alpha}_{[0,1]} = \norm{B^mA\alpha}_{[0,1]}
= \norm{\sum_{i=0}^m (A\alpha)_i B_i^m }_{[0,1]} \\
&\leq \norm{A\alpha}_\infty \norm{\sum_{i=0}^m B_i^m }_{[0,1]}
= \norm{A\alpha}_\infty 
\leq \norm{A}_\infty \norm{\alpha}_\infty.
\end{align*}
For the lower bound,
note that
$$
\Delta^{mT} p = \Delta^{mT} D^m\alpha = \Delta^{mT} B^m A \alpha = M_m A \alpha,
$$
and so
$$
\alpha = A^{-1} M_m^{-1} \Delta^{mT}p.
$$
Then,
\begin{align*}
\norm{\alpha}_\infty 
  &\leq \norm{A^{-1}}_\infty \norm{M_m^{-1}}_\infty \norm{\Delta^{mT}p}_\infty \\
  &\leq \norm{A^{-1}}_\infty \norm{M_m^{-1}}_\infty \norm{p}_{[0,1]}.
\end{align*}
Since $A^{-1} = E(s,:)$ is row-affine, $\norm{A^{-1}}_\infty=1$,
and so we get
$$
\dfrac{1}{\norm{M_m^{-1}}_\infty} \norm{\alpha}_\infty
\leq \norm{p}_{[0,1]}.
$$
\end{proof}

For this we recall from Proposition \ref{prop3} that
the dual basis is unaffected by which map is used.
Hence, we restate this fact in the following lemma:

\begin{lemma}
\label{lem7}
Suppose that data maps $\Lambda^n$ and $\tilde\Lambda^n$ are
both dual to the basis $B^n$ for $\$^n$.
Then, these data maps are equivalent on $\$^n$,
and dual bases $D^m$ of $\$^m$ for $m<n$ are identical for both maps.
\end{lemma}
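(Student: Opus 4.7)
The plan is to invoke Proposition \ref{prop3} directly, since Lemma \ref{lem7} is just a restatement of that result in the Bernstein-basis setting with $\Phi^n = B^n$. So the real work has already been done; what remains is to observe that the hypotheses of Proposition \ref{prop3} are met and to spell out the two conclusions.

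First I would address the equivalence claim. Let $f \in \$^n$ and expand $f = B^n \alpha$ in the Bernstein basis. Because $\Lambda^n$ is dual to $B^n$, we have $\Lambda^{nT} f = \Lambda^{nT} B^n \alpha = I\alpha = \alpha$, and by the identical reasoning $\tilde\Lambda^{nT} f = \alpha$. Hence $\Lambda^{nT} f = \tilde\Lambda^{nT} f$ for every $f\in\$^n$, which is the desired equivalence on $\$^n$.

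Second, for the identity $D^m = \tilde D^m$, I would recall from Proposition \ref{prop1} that the dual basis of $\$^m$ determined by a selection $s$ is $B^m E(s,:)^{-1}$, where the embedding matrix is $E = \Lambda^{nT} B^m$. Writing $B^m = B^n E$, the equivalence established in the previous step gives $\tilde E := \tilde\Lambda^{nT} B^m = \tilde\Lambda^{nT} B^n E = E$, so both dual maps produce the same embedding matrix, the same submatrix $E(s,:)$, and hence the same basis $B^m E(s,:)^{-1}$.

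There is no real obstacle here, since the argument is essentially bookkeeping and a direct appeal to Proposition \ref{prop3}. The only thing to be careful about is emphasizing that while $\Lambda^n$ and $\tilde\Lambda^n$ may differ as functionals on a larger ambient space (for example on $C[a,b]$, where $\tilde\Lambda^n$ avoids derivative evaluations by design), their action on $\$^n$ is identical, and it is only this action on $\$^n \supset \$^m$ that enters the construction of the dual basis in the subspace.
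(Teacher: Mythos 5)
Your proposal is correct and matches the paper's treatment: the paper offers no separate proof of Lemma \ref{lem7}, stating it as a restatement of Proposition \ref{prop3}, whose proof is exactly the argument you give (equivalence via $\Lambda^{nT}B^n\alpha=\alpha=\tilde\Lambda^{nT}B^n\alpha$, and invariance of $D^m$ because the transformation matrix $E(s,:)^{-1}$ depends only on the embedding $E$ with $B^m=B^nE$). Your added remark that the two maps may differ off $\$^n$ but agree on $\$^n\supset\$^m$ is consistent with the paper's own discussion preceding Proposition \ref{prop3}.
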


\begin{lemma}
For $f \in C([a,b])$, 
$$|\tilde\lambda^n_j f| \leq \norm{f}_{[a,b]} \ \norm{M^{-1}}_\infty.$$
\end{lemma}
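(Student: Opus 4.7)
The plan is to unwind the definition of $\tilde\lambda^n_j$ as a finite linear combination of point evaluations and then apply the triangle inequality. First I would write out the $j$-th component of the row vector identity $\tilde\Lambda^n = \Delta^n M^{-T}$, which gives
$$
\tilde\lambda^n_j = \sum_{i=0}^n M^{-T}(i,j)\, \delta_{\xi_i} = \sum_{i=0}^n M^{-1}(j,i)\, \delta_{\xi_i}.
$$
In particular, $\tilde\lambda^n_j$ makes sense on all of $C([a,b])$ (this is the content of the earlier remark that $\tilde\Lambda^n$ avoids derivative evaluations), and for any $f \in C([a,b])$,
$$
\tilde\lambda^n_j f = \sum_{i=0}^n M^{-1}(j,i)\, f(\xi_i).
$$

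Next I would apply the triangle inequality and bound each $|f(\xi_i)|$ by $\norm{f}_{[a,b]}$:
$$
|\tilde\lambda^n_j f| \leq \sum_{i=0}^n |M^{-1}(j,i)|\, |f(\xi_i)| \leq \norm{f}_{[a,b]} \sum_{i=0}^n |M^{-1}(j,i)|.
$$
Finally, I recognize the sum on the right as (at most) the maximum absolute row sum of $M^{-1}$, which is precisely $\norm{M^{-1}}_\infty$, yielding the claimed inequality.

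There is really no obstacle here: the result is essentially just the operator-norm bound applied to a functional represented as a finite weighted sum of point evaluations. The only thing to be careful about is the transpose convention in $\Delta^n M^{-T}$, so that one correctly identifies the coefficients of $\tilde\lambda^n_j$ as the $j$-th row of $M^{-1}$ rather than its $j$-th column; this is what makes the row-sum form of $\norm{M^{-1}}_\infty$ appear naturally on the right-hand side.
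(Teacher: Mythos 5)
Your proof is correct and follows essentially the same route as the paper: expand $\tilde\lambda^n_j$ via $\tilde\Lambda^n = \Delta^n M^{-T}$ into a weighted sum of point evaluations, apply the triangle inequality, and identify the coefficient sum as a row sum of $M^{-1}$ bounded by $\norm{M^{-1}}_\infty$. The paper phrases this through the column sums of $C := M^{-T}$ (i.e.\ $\norm{M^{-T}}_1 = \norm{M^{-1}}_\infty$), which is exactly your transpose bookkeeping.
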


\begin{proof}
Let $C := M^{-T}$.
Then, $\tilde\Lambda^m = \Delta C$, and so
\begin{align*}
|\tilde\lambda^m_j f| &=  |\Delta C(:,j) f| 
  = |\sum_{i=0}^n C(i,j)\delta_{\frac{i}{n}}  f| 
  = |\sum_{i=0}^n C(i,j)f(\frac{i}{n}) | \\
  &\leq \norm{f}_{[a,b]} \ \sum_{i=0}^n |C(i,j)|  
  \leq \norm{f}_{[a,b]} \ \norm{M^{-T}}_1 
  = \norm{f}_{[a,b]} \ \norm{M^{-1}}_\infty.
\end{align*}
\end{proof}

For any selection $s$, let
$$
Q_s : C([a,b]) \rightarrow \RR : f \mapsto D^m \tilde \Lambda^{n}(s)^Tf.
$$
Then, we have following:

\begin{lemma}
\label{lem6}
$Q_s$ is a linear projector of $C([a,b])$ onto $\$^m$.
\end{lemma}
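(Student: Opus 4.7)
The plan is to verify the three constituent facts: that $Q_s$ is linear, that its image lies in $\$^m$, and that it fixes every element of $\$^m$ (which, combined with the image being $\$^m$, makes it a projector).

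Linearity is immediate and essentially free. Each component $\tilde\lambda_j^n$ is, by its definition $\tilde\Lambda^n = \Delta^n M^{-T}$, a finite linear combination of point evaluations $\delta_{\xi_j^n}$, hence a linear functional on $C([a,b])$. Thus $f \mapsto \tilde\Lambda^n(s)^T f$ is a linear map into $\RR^{m+1}$, and post-composition with the linear map $c \mapsto D^m c$ keeps things linear. The image lies in $\$^m$ simply because $D^m$ is (by construction) a basis for $\$^m$, so every value $D^m c$ is a polynomial of degree at most $m$.

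The only substantive step is to show $Q_s p = p$ for every $p \in \$^m$, and this is where the earlier structural results do the work. Write $p = D^m \alpha$ for the unique coefficient vector $\alpha \in \RR^{m+1}$. Then
\begin{align*}
Q_s p = D^m\, \tilde\Lambda^n(s)^T D^m \alpha.
\end{align*}
Here is the key point: by Lemma \ref{lem7} (which is the restatement of Proposition \ref{prop3}), the two maps $\Lambda^n$ and $\tilde\Lambda^n$, both being dual to $B^n$ on $\$^n$, agree as functionals on $\$^n$. Since $\$^m \subset \$^n$ and the columns of $D^m$ are polynomials in $\$^m$, we may replace $\tilde\Lambda^n(s)^T D^m$ by $\Lambda^n(s)^T D^m$. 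But by the defining duality of $D^m$, namely $\Lambda^n(s)^T D^m = I$, this product is the identity matrix, so $Q_s p = D^m \alpha = p$.

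The main (and really only) obstacle is recognizing that although $\tilde\Lambda^n$ is defined via $\Delta^n M^{-T}$ and so \emph{a priori} has nothing to do with $\Lambda^n(s)$, the equivalence of the two dual maps on $\$^n$ lets us transfer the duality relation $\Lambda^n(s)^T D^m = I$ over to $\tilde\Lambda^n(s)$. Once that observation is in place the calculation collapses in one line. After establishing $Q_s\vert_{\$^m} = \mathrm{id}_{\$^m}$, the projector identity $Q_s^2 = Q_s$ follows because $Q_s f \in \$^m$ for every $f \in C([a,b])$, and applying $Q_s$ again to a polynomial in $\$^m$ leaves it unchanged.
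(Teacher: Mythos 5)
Your proof is correct and takes essentially the same route as the paper: everything reduces to the identity $\tilde\Lambda^n(s)^T D^m = I$, obtained by transferring the duality $\Lambda^n(s)^T D^m = I$ through Lemma \ref{lem7}. The only cosmetic difference is that you first verify $Q_s$ restricts to the identity on $\$^m$ and deduce idempotency, whereas the paper computes $Q_s^2 = D^m\big(\tilde\Lambda^n(s)^T D^m\big)\tilde\Lambda^n(s)^T = Q_s$ directly.
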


\begin{proof}
Linearity is immediate from 
$$Q_s(\alpha f + \beta g)
= D^m \tilde \Lambda^n(s)^T (\alpha f + \beta g)
=\alpha D^m \tilde \Lambda^n(s)^Tf + \beta D^m \tilde \Lambda^n(s)^T g
= \alpha Q_sf + \beta Q_sg,
$$
and idempotency follows from
$$
Q_s^2 = 
(D^m \tilde \Lambda^{n}(s)^T)^2
= D^m \Big(\tilde \Lambda^{n}(s)^T D^m \Big) \tilde \Lambda^{n}(s)^T
= D^m \tilde \Lambda^{n}(s)^T
=Q_s.
$$
since $\tilde \Lambda^n(s)^T D^m = I$.
Therefore, $Q_s$ is a linear projector.
\end{proof}

\begin{theorem}
Let $f \in C[a,b]$. Then,
\begin{itemize}
\item
$\norm{Q_sf}_{[a,b]} \leq \norm{A}_\infty \norm{M_m^{-1}}_\infty  \norm{f}_{[a,b]}$,
\item
$\norm{f-Q_sf}_{[a,b]} \leq (1+\norm{Q_s}) d(f,\$_m)_{[a,b]}$.
\end{itemize}
\end{theorem}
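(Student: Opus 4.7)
My plan is to treat the two inequalities separately, since the first is a direct operator-norm computation and the second is the classical Lebesgue-style projector estimate.

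For the upper bound $\norm{Q_sf}_{[a,b]} \leq \norm{A}_\infty \norm{M_m^{-1}}_\infty \norm{f}_{[a,b]}$, I will unfold the definition of $Q_s$ step by step. Writing $Q_sf = D^m\tilde\Lambda^n(s)^Tf = B^mA\tilde\Lambda^n(s)^Tf$, I set $\gamma := A\tilde\Lambda^n(s)^Tf \in \RR^{m+1}$. The key observation is that $B^m$ is a convex partition of unity on $[a,b]$, so for any coefficient vector $\gamma$,
\[
\norm{B^m\gamma}_{[a,b]} = \max_{t\in[a,b]} \Big| \sum_i \gamma_i B_i^m(t) \Big| \leq \norm{\gamma}_\infty \max_{t\in[a,b]} \sum_i B_i^m(t) = \norm{\gamma}_\infty.
\]
Submultiplicativity of the infinity norm then yields $\norm{Q_sf}_{[a,b]} \leq \norm{A}_\infty \norm{\tilde\Lambda^n(s)^Tf}_\infty$, and the componentwise bound on $|\tilde\lambda_j f|$ from the preceding lemma controls each entry of $\tilde\Lambda^n(s)^Tf$ by $\norm{M_m^{-1}}_\infty \norm{f}_{[a,b]}$. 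Taking the max over the selected indices closes the estimate.

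For the error bound $\norm{f-Q_sf}_{[a,b]} \leq (1+\norm{Q_s})\,d(f,\$_m)_{[a,b]}$, I will use the standard Lebesgue-style argument available for any linear projector. By Lemma \ref{lem6}, $Q_s$ is a linear projector onto $\$_m$, so $Q_sg = g$ for every $g \in \$_m$. Hence for any such $g$,
\[
f - Q_sf = (f-g) - Q_s(f-g) = (I - Q_s)(f-g),
\]
from which the triangle inequality gives $\norm{f-Q_sf}_{[a,b]} \leq (1+\norm{Q_s})\norm{f-g}_{[a,b]}$. Taking the infimum over $g\in\$_m$ produces the stated bound with $d(f,\$_m)_{[a,b]}$ on the right.

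I expect the two pieces to be essentially routine, and the only mild subtlety is bookkeeping: making sure the functionals $\tilde\lambda^n_{s(i)}$ produced by the selection are exactly the ones to which the preceding lemma applies, and that the invariance of the dual basis under choice of equivalent data map (Lemma \ref{lem7}) is what lets us freely substitute $\tilde\Lambda^n(s)$ for $\Lambda^n(s)$ in the definition of $Q_s$ so that the operator is well-defined on $C([a,b])$ in the first place. Neither issue requires computation beyond what appears in the previous two lemmas.
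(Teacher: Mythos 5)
Your proposal is correct and follows essentially the same route as the paper: the first bound comes from the partition-of-unity property of $B^m$ together with $\norm{A}_\infty$ and the lemma bounding $|\tilde\lambda^n_j f|$ by $\norm{M_m^{-1}}_\infty\norm{f}_{[a,b]}$, and the second is the same Lebesgue-type projector argument (the paper inserts $p\in\$_m$ and uses $Q_sp=p$, which is your $(I-Q_s)(f-g)$ decomposition in slightly different notation). No gaps.
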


\begin{proof}
For $t \in [a,b]$,
\begin{align*}
|Q_sf(t)| &= |(D^m \tilde \Lambda^{n}(s)^Tf) (t)|
 = |(B^m A \tilde \Lambda^{n}(s)^Tf) (t)| \\
& \leq \norm{A}_\infty \norm{\Lambda^n(s)^T f}_\infty \sum_{i=0}^m B_i^m(t) 
 = \norm{A}_\infty \norm{\Lambda^n(s)^T f}_\infty  \\
&\leq \norm{A}_\infty \norm{M^{-1}}_\infty  \norm{f}_{[a,b]}.
\end{align*}
This establishes the first result.
For the second result, 
take an arbitrary $p \in \$_m$.
Then,
\begin{align*}
\norm{f-Q_sf}_{[a,b]} 
&\leq \norm{f-p}_{[a,b]} + \underbrace{\norm{p - Q_sp}_{[a,b]}}_0 + \norm{Q_s(p-f)}_{[a,b]} \\
&\leq (1 + \norm{Q_s}_{[a,b]}) \norm{f-p}_{[a,b]}  \\
&\leq (1 + \norm{Q_s}_{[a,b]}) d(f,\$_m)_{[a,b]}.
\end{align*}

\end{proof}

\section{Bernstein-like Operator}

Let
$$ D_{m}f := D^m \Delta^{nT}(s) f  = \sum_{i=0}^m f(\xi_{s(i)}^n) D_i^m$$
be a Bernstein-like operator for our Dual functions,
with $\xi_{j}^n := a + \frac{j}{n}(b-a)$.

Let
$$
\norm{f}_{k,[a,b]} := \max \{ |f^{(k)}(x)| : x\in[a,b]\},
$$
with 
$\norm{f}_{[a,b]} := \norm{f}_{0,[a,b]}$,
and
$$
\omega(f,h) := \max \{|f(x) - f(y)| : x,y\in [a,b], |x-y| \leq h\},
$$
the uniform modulus of continuity relative to the interval $[a,b]$.
Then, we have the following:

\begin{theorem}
$$
\norm{f - D_mf}_{[a,b]} \leq 
\begin{cases}
\norm{A}_\infty \omega(f,b-a), & f \in C([a,b]); \\
(b-a)\norm{A}_\infty \norm{f'}_{[a,b]}, & f \in C^1([a,b]); \\
\frac12 (b-a)^2 \norm{A}_\infty \norm{f''}_{[a,b]}, & f \in C^2([a,b]).
\end{cases}
$$
\end{theorem}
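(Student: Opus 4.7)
The plan is to exploit two reproduction properties of $D_m$ together with a single auxiliary estimate. First, by affineness of $D^m$ (Theorem \ref{thm2}), $D_m$ reproduces constants: $\sum_{i=0}^m D_i^m \equiv 1$. Second, by Theorem \ref{thm3} (applied on $[a,b]$ instead of $[0,1]$), $D_m$ reproduces the identity: $\sum_{i=0}^m \xi_{s(i)}^n D_i^m(x) = x$. The auxiliary estimate I would establish is
$$\sum_{i=0}^m |D_i^m(x)| \leq \norm{A}_\infty \qquad \text{for all } x \in [a,b],$$
which follows from $D_i^m = \sum_j A(j,i) B_j^m$, non-negativity of $B_j^m$, and $\sum_j B_j^m(x) = 1$, by interchanging the order of summation.

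For the $C([a,b])$ bound, I would use constant reproduction to write
$$f(x) - D_m f(x) = \sum_{i=0}^m \bigl(f(x) - f(\xi_{s(i)}^n)\bigr)\, D_i^m(x),$$
bound each factor $|f(x) - f(\xi_{s(i)}^n)|$ by $\omega(f, b-a)$ (since $x, \xi_{s(i)}^n \in [a,b]$ so $|x - \xi_{s(i)}^n| \leq b-a$), and finish with the auxiliary estimate. The $C^1$ case uses the same decomposition, replacing the modulus of continuity bound with the mean-value estimate $|f(x) - f(\xi_{s(i)}^n)| \leq (b-a) \norm{f'}_{[a,b]}$.

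The $C^2$ case is the main step and is where linear reproduction is essential. I would expand by Taylor's theorem at the point $x$ with Lagrange remainder:
$$f(\xi_{s(i)}^n) = f(x) + f'(x)(\xi_{s(i)}^n - x) + \tfrac12 f''(c_i)(\xi_{s(i)}^n - x)^2$$
for some $c_i$ between $x$ and $\xi_{s(i)}^n$. Summing against $D_i^m(x)$, the constant term recovers $f(x)$, and linear reproduction makes $\sum_i (\xi_{s(i)}^n - x) D_i^m(x) = 0$, annihilating the first-order term. Only the remainder survives, so
$$|f(x) - D_m f(x)| \leq \tfrac12 \norm{f''}_{[a,b]} \sum_{i=0}^m (\xi_{s(i)}^n - x)^2 |D_i^m(x)| \leq \tfrac12 (b-a)^2 \norm{A}_\infty \norm{f''}_{[a,b]},$$
using $|\xi_{s(i)}^n - x| \leq b-a$ and the auxiliary estimate.

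The only conceptual obstacle is the appearance of $\norm{A}_\infty$ rather than $1$: because $D^m$ is affine but not convex (as emphasized in the discussion following Theorem \ref{thm1}), one cannot replace $\sum_i |D_i^m(x)|$ by $\sum_i D_i^m(x) = 1$ as in the classical Bernstein estimates. Routing through the factorization $D^m = B^m A$ and the non-negativity of $B^m$ is the natural way to absorb this loss into the single constant $\norm{A}_\infty$, and this is exactly the role played by the $\norm{A}_\infty$ factor in all three bounds.
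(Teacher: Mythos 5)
Your proposal is correct and follows essentially the same route as the paper: constant reproduction via affineness plus the bound $\sum_i |D_i^m(x)| \leq \norm{A}_\infty$ through $D^m = B^mA$ for the first two cases, and Taylor expansion at $x$ with the linear-reproduction identity $\sum_i \xi_{s(i)}^n D_i^m(x) = x$ (Theorem \ref{thm3} on $[a,b]$) killing the first-order term in the $C^2$ case. The only differences are cosmetic: you obtain the $C^1$ bound directly from the mean value theorem rather than via $\omega(f,h) \leq h\norm{f'}_{[a,b]}$, and you correctly let the Taylor remainder point $c_i$ depend on $i$, which is slightly more careful than the paper's single $\eta$.
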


\begin{proof}
By affineness of the basis $D^m = B^mA$ and convexity of $B^m$ for all $x \in [a,b]$,
\begin{align*}
|f(x) - D_mf(x)| 
      &= |\sum_{i=0}^m (f(x) - f(\xi^n_{s(i)}) D^m_i(x)| \\
      &\leq \omega(f,b-a) \sum_{i=0}^m |D^m_i(x)| \\
      &= \omega(f,b-a) |\sum_{j=0}^m B_j^m(x) \sum_{i=0}^m A(j,i)| \\
      &\leq \omega(f,b-a) \norm{A}_\infty \sum_{j=0}^m |B_j^m(x)| \\
      &= \norm{A}_\infty \omega(f,b-a).
\end{align*}
This gives the first case.
The second case follows from the first case and the estimate
\begin{align*}
\omega(f,h) &= \max_{|x-y|\leq h}  |f(x) - f(y)|  \\
 &= \max_{|x-y|\leq h}  \frac{|f(x) - f(y)|}{|x-y|} |x-y|  \\
 &\leq \max_{|x-y|\leq h}  \frac{|f(x) - f(y)|}{|x-y|} h  \\
 &\leq h \norm{f'}_{[a,b]}.
\end{align*}
Then, for some $\eta$ between $\xi_{s(i)}^n$ and $x$,
\begin{align*}
|f(x) - D_mf(x)|
      &= |f(x) - \sum_{i=0}^m f(\xi_{s(i)}^n)D^m_i| \\
      &= |f(x) - \sum_{i=0}^m \Big[f(x)+f'(x)(\xi_{s(i)}^n-x)+\frac12 f''(\eta) (\xi_{s(i)}^n-x)^2 \Big] D^m_i| \\
      &= |f(x) - \sum_{i=0}^m f(x) D_i^m 
                  +  \sum_{i=0}^m f'(x)(\xi_{s(i)}^n-x) D_i^m 
                  + \frac12 \sum_{i=0}^m f''(\eta) (\xi_{s(i)}^n-x)^2 D^m_i | \\
      &= |\sum_{i=0}^m f'(x)(\xi_{s(i)}^n-x) D_i^m 
                  + \frac12 \sum_{i=0}^m f''(\eta) (\xi_{s(i)}^n-x)^2 D^m_i | \\
      &= |f'(x) \sum_{i=0}^m \xi_{s(i)}^nD_i^m - f'(x) x 
                  + \frac12 \sum_{i=0}^m f''(\eta) (\xi_{s(i)}^n-x)^2 D^m_i | \\
\end{align*}
By Theorem \ref{thm3} (tranformed to the interval $[a,b]$), $\sum_{i=0}^m \xi_{s(i)}^nD_i^m =  x $,
and so
\begin{align*}
|f(x) - D_mf(x)|
    &= |f'(x) x - f'(x) x + \frac12 \sum_{i=0}^m f''(\eta) (\xi_{s(i)}^n-x)^2 D^m_i | \\
    &= |\frac12 \sum_{i=0}^m f''(\eta) (\xi_{s(i)}^n-x)^2 D^m_i | \\
   &\leq \frac12 \norm{f''}_{[a,b]} (b-a)^2 \sum_{i=0}^m |D^m_i(x)| \\
   &\leq \frac12 \norm{f''}_{[a,b]} (b-a)^2 \norm{A}.
\end{align*}
\end{proof}

\bibliographystyle{amsplain}

\end{document}